\documentclass[12pt,twoside,a4paper]{amsart}

\usepackage[margin=2.5cm]{geometry}

\usepackage{amssymb,mathtools}

 \usepackage[dvipsnames]{xcolor} 
\usepackage[colorlinks]{hyperref}
\usepackage[color=Orange!50!white, textsize=scriptsize]{todonotes}

\newtheorem{thm}{Theorem}[section]

\newtheorem{prop}[thm]{Proposition}
\theoremstyle{definition}

\newtheorem{conjecture}[thm]{Conjecture}

\numberwithin{equation}{section}
\allowdisplaybreaks

\begin{document}

\keywords {Incidence estimate, projection theory, sum--product problem}
\subjclass[2010]{05B25, 52C99 (primary), 28A80 (secondary)}

\title[Nonempty-interior problems for product sets]{Peres--Schlag's nonempty-interior problem and a shifted-product variant for product sets}
\author{Guo-Dong Hong} 
\address{Department of Mathematics, California Institute of Technology, Pasadena, CA 91125,
USA}
\email{ghong@caltech.edu}

\author{Chong-Wei Liang}
\address{Department of Mathematics, National Taiwan University, Taiwan.}
\email{d10221001@ntu.edu.tw}

\author{Chun-Yen Shen} 
\address{Department of Mathematics, National Taiwan University, Taiwan.}
\email{cyshen@math.ntu.edu.tw}
\date{}

\begin{abstract}
We study finite-field analogues of the Peres--Schlag nonempty-interior problem
for product sets.  Given \(A\subseteq\mathbb F_p\), we ask when a suitable
one-dimensional linear image of \(A^n\) is full; equivalently, when there exist
coefficients \(t_1,\ldots,t_n\in\mathbb F_p\) such that
\[
    t_1A+\cdots+t_nA=\mathbb F_p.
\]
For \(n\ge3\), we prove that, for every \(\eta>0\), this holds whenever
\[
    |A|\gg_{n,\eta} p^{\frac{3}{2n-1}+\eta}.
\]
This improves the exponent predicted by the direct product-set analogue of the
Peres--Schlag threshold, namely \(|A|\gg p^{2/n}\).  We also prove a
two-dimensional near-half-density result.

Motivated by sum-product phenomena, we also introduce and study a product-type
variant in which linear forms are replaced by shifted product maps.  We prove
finite-field covering results for shifted products
\[
    (t_1 + A)(t_2 + A)\cdots(t_n + A)
\]
at the same density scale as in the linear case.  Finally, we prove a Euclidean
shifted-product analogue: if \(A\subseteq\mathbb R\) is Borel and
\(\dim_H A>2/n\), then some shifted product of \(n\) copies of \(A\) contains a
nonempty open interval.
\end{abstract}

\maketitle

\section{Introduction}

The study of projections occupies a central place in geometric measure theory and fractal geometry \cite{MR3558147,MR0248779,MR0409774,MR0063439}. A fundamental theme is understanding how the Hausdorff dimension of a set is reflected in its projections onto lower-dimensional subspaces.
Let $E\subseteq\mathbb R^n$ be a Borel set, and let $\pi_V:\mathbb R^n\to V$ denote orthogonal projection onto a $k$-dimensional subspace $V\in G(n,k)$. The classical Marstrand--Mattila projection theorem asserts that, for almost every $V\in G(n,k)$,
$$
\dim_H \pi_V(E)=\min\{\dim_H E,k\}.
$$
Moreover, if $\dim_H E>k$, then $\pi_V(E)$ has positive $k$-dimensional Lebesgue measure for almost every $V\in G(n,k)$.

A stronger question asks when the projections have nonempty interior in the target subspace. Peres and Schlag \cite{MR1749437} proved that if
$$
\dim_H E>2k,
$$
then $\pi_V(E)$ has nonempty interior for almost every $V\in G(n,k)$. See also the survey \cite{MR2044636} for background on nonempty-interior problems for projections. For projections onto lines, this gives the threshold $\dim_H E>2$, which is sharp in general.

Finite-field analogues of these projection questions were studied by
Chen~\cite{MR3753167}.  In the finite-field setting, the analogue of having nonempty
interior is being {\it full}: a projection is full if its image is the whole target
space.  Chen proved, in particular, a finite-field analogue of the
Peres--Schlag threshold.  If \(E\subseteq\mathbb F_p^n\) has size
\[
    |E| \gg p^s
    \quad\text{with}\quad
    s>2k,
\]
then almost every \(k\)-dimensional finite-field projection of \(E\) is full. A recent result of the authors \cite{HLS} establishes a sharp finite-field projection theorem. If \(E\subseteq\mathbb F_p^n\) has size
\[
    |E|\gg p^s
    \quad\text{with}\quad
    s>k+1,
\]
then almost every \(k\)-dimensional finite-field projection of \(E\) is full. Moreover, the threshold $k+1$ is sharp, as shown by Kakeya-type constructions over finite fields.
\smallskip

The purpose of this paper is to study this finite-field projection problem for
sets with product structure.  Let \(A\subseteq\mathbb F_p\), and consider
\(E=A^n\subseteq\mathbb F_p^n\).  A one-dimensional linear image of \(A^n\)
has the form
$ (x_1,\ldots,x_n)
    \mapsto
    t_1x_1+\cdots+t_nx_n$,
and its image is the dilated sumset
$ t_1A+\cdots+t_nA$.
Thus, in the product-set setting, the full-projection problem becomes the
following additive covering question: when can one choose coefficients
\(t_1,\ldots,t_n\in\mathbb F_p\) such that
\[
    t_1A+\cdots+t_nA=\mathbb F_p?
\]

For a general set \(E\subseteq\mathbb F_p^n\), the Peres--Schlag--Chen
threshold for one-dimensional projections corresponds to the scale
$|E|\gg p^2$.
For \(E=A^n\), this suggests the product-set threshold
$$|A|\gg p^{2/n}.$$
One of the main points of this paper is that product structure allows one to go
below this scale.
For every \(n\ge3\) and every \(\eta>0\), we prove full coverage under the
hypothesis
\[
    |A|\gg_{n,\eta} p^{\frac{3}{2n-1}+\eta}.
\]
Note that
\[
    \frac{3}{2n-1}<\frac{2}{n}
\]
for all $n\geq 3$, and this gives an exponent improvement over the direct
product-set analogue of the Peres--Schlag--Chen threshold.

Motivated by sum-product phenomena \cite{MR2053599,MR1948103}, we also study
a product-type variant of the same question.  Instead of linear forms, we
consider shifted product maps
$(x_1,\ldots,x_n)
    \mapsto
    (t_1+x_1)(t_2+x_2)\cdots(t_n+x_n)$.
The image of \(A^n\) under such a map is the shifted product set
\[
    (t_1 + A)(t_2 + A)\cdots(t_n + A).
\]
We prove finite-field covering results for these product-type projections
parallel to the linear results.  We also prove a Euclidean product-type result
for Borel subsets of \(\mathbb R\).

\subsection{Linear projections}

We first state the finite-field results for linear projections of product sets.
The two-dimensional case has a natural half-density threshold.  Indeed, if
\(|A|\ge (p+1)/2\), then the Cauchy--Davenport inequality implies 
\[
    A+tA=\mathbb F_p
\]
for every \(t\in\mathbb F_p^\times\).  Our first result shows that, after
allowing the dilation \(t\) to depend on \(A\), this conclusion remains true
slightly below the half-density threshold.

\begin{thm}
\label{thm:half-density-dilated-sumset-covering}
There is an absolute constant \(\varepsilon_0>0\) such that, for all sufficiently
large primes \(p\), the following holds.  If \(A\subseteq \mathbb F_p\)
satisfies
\[
    |A|>\left(\frac12-\varepsilon_0\right)p,
\]
then there exists \(t\in\mathbb F_p^\times\) such that $A+tA=\mathbb F_p$.
For example, one may take \(\varepsilon_0=10^{-3}\), provided \(p\) is
sufficiently large.
\end{thm}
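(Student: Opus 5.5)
The plan is a second-moment (Fourier) argument averaged over the dilation \(t\), followed by a stability step. The second moment alone lands exactly at the half-density threshold, so the stability step supplies the extra \(\varepsilon_0\).

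\textbf{Step 1: averaged second moment.} Write \(\alpha=|A|/p\) and \(r_t(x)=\#\{(a,b)\in A^2: a+tb=x\}\). By Fourier inversion, \(r_t(x)=\alpha^2p+\frac1p\sum_{\xi\ne0}\widehat{1_A}(\xi)\widehat{1_A}(t\xi)e_p(\xi x)\). Plancherel then gives
\[
\sum_x\bigl(r_t(x)-\alpha^2p\bigr)^2=\frac1p\sum_{\xi\ne0}|\widehat{1_A}(\xi)|^2|\widehat{1_A}(t\xi)|^2 .
\]
Summing over \(t\in\mathbb F_p^\times\) and using \(\sum_{t\ne0}|\widehat{1_A}(t\xi)|^2=\sum_{\eta\ne0}|\widehat{1_A}(\eta)|^2=p|A|-|A|^2\), the left side summed over \(t\) equals \((p|A|-|A|^2)^2/p\). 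Each missing point \(x\notin A+tA\) contributes \(\alpha^4p^2\). Hence the number \(m_t\) of missing points satisfies
\[
\sum_{t\ne0} m_t\le p\,\frac{(1-\alpha)^2}{\alpha^2}.
\]
This is \(<p-1\) when \(\alpha>1/2\), but only \(\le(1+O(\varepsilon_0))p\) in our range. So the plain second moment just fails, and the proof must show that it cannot be essentially sharp. If the conclusion fails, every \(t\ne0\) has \(m_t\ge1\), and the bound above shows that for all but \(O(\varepsilon_0 p)\) values of \(t\) we have \(m_t=1\). In addition, the variance is then almost entirely accounted for by the single spike at the missing point.

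\textbf{Step 2: structure from a missing point.} If \(x_t\notin A+tA\), then \(A\cap(x_t-tA)=\emptyset\). Both sets have size \(>(1/2-\varepsilon_0)p\), so \(x_t-tA\) agrees with \(A^c\) up to \(O(\varepsilon_0p)\) elements. Comparing two such values \(t,s\), we get that \(A\) and \(y_{t,s}+(t/s)A\) differ in \(O(\varepsilon_0p)\) elements. Thus, for all but \(O(\varepsilon_0p)\) dilations \(u\), there is a shift \(c_u\) with \(|A\cap(c_u+uA)|\ge(\alpha-O(\varepsilon_0))p\).

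\textbf{Step 3: contradiction (main obstacle).} By Step 2, the function \(c\mapsto|A\cap(c+uA)|-\alpha^2p\) has a spike of size \(\approx p/4\) at \(c_u\). Its averaged square sum, computed by the Fourier identity above, is also \(\approx p^2/16\). This near-equality is only possible if, for almost every \(u\), the function is concentrated at a single point. I would first use this to show that the spectrum is nearly flat: \(|\widehat{1_A}(\xi)|^2\approx|A|(p-|A|)/(p-1)\) for almost all \(\xi\ne0\). Then I would compare fourth moments. A spike of size \(p/4\) forces \(\sum_c\bigl(|A\cap(c+uA)|-\alpha^2p\bigr)^4\gtrsim p^4/256\) for almost every \(u\). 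On the other hand, expanding this fourth moment in Fourier and averaging over \(u\) gives a sum over frequency quadruples with \(\xi_1+\xi_2=\xi_3+\xi_4\) and their dilates. With a flat spectrum, this sum should be smaller by a factor tending to \(0\) for \(\alpha\approx1/2\).

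An alternative route for Step 3 uses the structure directly. \(A\) is almost invariant under the affine maps \(y\mapsto c_u+uy\) for almost all \(u\), so it is almost invariant under the affine group these maps generate, which is essentially all of \(\mathrm{Aff}(\mathbb F_p)\). That group acts \(2\)-transitively, which is incompatible with density \(\approx1/2\): averaging \(1_A(y)1_A(y')\) over the group forces \(|A\cap(c+uA)|\approx\alpha^2p\), not \(\alpha p\).

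I expect this step to be the hard one, specifically making the error terms quantitative enough to get an explicit constant such as \(\varepsilon_0=10^{-3}\). I would try the affine-group averaging first, since it avoids higher Fourier moments.
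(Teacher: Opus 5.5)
\paragraph{Verdict: genuine gap in Step~3.}
Your Steps 1--2 are fine. Step 1 is in fact unnecessary. Under the contradiction hypothesis every $t\in\mathbb F_p^\times$ has a missing point $x_t$, so Step 2 gives, for \emph{every} $t$ and not just most, the map $g_t(y)=ty+(x_1-x_t)$ with $|A\triangle g_t(A)|\le\delta p$, where $\delta=(2p-4|A|)/p<4\varepsilon_0$.

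The gap is Step 3. Your primary route cannot work as described. A flat spectrum cannot make the fourth moment of $f_u(c)=|A\cap(c+uA)|-\alpha^2p$ small, because a mean-zero spike of height $p/4$ is exactly a function whose Fourier transform has flat modulus $\approx p/4$. The flatness you would derive from the spike is therefore fully consistent with it. Any contradiction has to come from the incompatibility of the shifts $c_u$ across different $u$, which the fourth-moment heuristic does not see.

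Your alternative route is the right idea, and it is essentially what the paper does. However, the key sentence, that $A$ is ``almost invariant under the affine group these maps generate, which is essentially all of $\mathrm{Aff}(\mathbb F_p)$,'' is not justified. Defects add under composition, so almost invariance survives only for words of bounded length. Moreover, the maps $g_t$ may already be closed under composition: when $d_{st}=sd_t+d_s$ with $d_t=x_1-x_t$, they are exactly $g_t(y)=c+t(y-c)$, the stabilizer of a point $c$. That group is neither all of $\mathrm{Aff}(\mathbb F_p)$ nor $2$-transitive.

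\paragraph{The missing idea.}
What you need is a dichotomy on $h_{s,t}:=g_{st}^{-1}g_sg_t$. This map is a translation, and by the triangle inequality $|A\triangle h_{s,t}(A)|\le 3\delta p$.
\begin{itemize}
\item If some $h_{s,t}=\tau_r$ with $r\ne0$, then $g_u\tau_rg_u^{-1}=\tau_{ur}$ has defect at most $5\delta p$ for every $u$. So $A$ is almost invariant under all nonzero translations. Averaging over them gives
\[
\frac{2|A|(p-|A|)}{p-1}\le 5\delta p<20\varepsilon_0 p,
\]
which is impossible since the left side is about $p/2$.
\item Otherwise $g_sg_t=g_{st}$ for all $s,t$. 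The cocycle identity then forces a common fixed point $c$. Averaging $|A\triangle g_t(A)|\le\delta p$ over the dilations about $c$, which act transitively on $\mathbb F_p\setminus\{c\}$, gives the same contradiction.
\end{itemize}
Only transitivity of the averaging family is needed, not $2$-transitivity. Once this dichotomy is in place, the explicit constant $\varepsilon_0=10^{-3}$ follows directly, with no delicate error bookkeeping.
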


This is a near-density statement: it improves the elementary half-density bound
by a fixed constant, but it does not give an exponent saving in the
two-dimensional problem.  It remains open whether one can force
\(A+tA=\mathbb F_p\) from a substantially smaller hypothesis, for instance
from
\[
    |A|\gg p^{1-\delta}
\]
for some \(\delta>0\).

In higher dimensions, the additional summands allow a genuine improvement over
the scale \(p^{2/n}\) suggested by the general projection threshold.  This is
the content of the next theorem.

\begin{thm}
\label{thm:dilated-n-fold-covering-improved}
Let \(n\ge 3\) and let \(\eta>0\).  Then there exist constants
\(C=C(n,\eta)>0\) and \(p_0=p_0(n,\eta)\) such that the following holds for
every prime \(p\ge p_0\).

If \(A\subseteq \mathbb F_p\) satisfies
\[
    |A|\ge C p^{\frac{3}{2n-1}+\eta},
\]
then there exist \(t_2,\ldots,t_n\in\mathbb F_p^\times\) such that
$ A+t_2A+\cdots+t_nA=\mathbb F_p$.
\end{thm}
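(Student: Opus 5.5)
The plan is to combine two mechanisms: an \emph{energy-growth} step, which builds a large partial sum $B=A+t_2A+\cdots+t_mA$, and a \emph{second-moment covering} step, which uses the remaining $k=n-m$ dilates to turn $B$ into all of $\mathbb F_p$. The split point $m$ is then optimised, and this optimisation should produce the exponent $\frac{3}{2n-1}$.

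\emph{Growth step.} For sets $B,A\subseteq\mathbb F_p$, I would sum the additive energy $E(B,tA)=\#\{b_1+ta_1=b_2+ta_2\}$ over $t\in\mathbb F_p^\times$. If $a_1=a_2$ then $b_1=b_2$, which contributes at most $p|A||B|$. If $a_1\ne a_2$ then $t$ is determined, which contributes at most $|A|^2|B|^2$. Hence some $t$ has $E(B,tA)\le |A|^2|B|^2/p+2|A||B|$. Cauchy--Schwarz then gives
\[
|B+tA|\ge \tfrac12\min\{p,\ |A||B|\}.
\]
To get a sharper bound at intermediate sizes, I would replace this crude count by a point--line incidence bound over $\mathbb F_p$ (Stevens--de Zeeuw type) applied to the collinear-quadruple count $\sum_t E(B,tA)$. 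Iterating $m-1$ times produces $B$ with $|B|\gg\min\{p,|A|^m\}$, up to the losses that the $\eta$ in the statement is meant to absorb.

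\emph{Covering step.} Let $r(x)$ be the number of representations $x=b+s_1a_1+\cdots+s_ka_k$, and let $\mu=|B||A|^k/p$ be its mean. Every $x$ with $r(x)=0$ contributes $\mu^2$ to $\sum_x (r(x)-\mu)^2$. Averaging over $(s_1,\dots,s_k)\in(\mathbb F_p^\times)^k$, Plancherel gives
\[
\sum_{s}\#\{x: r(x)=0\}\le \mu^{-2}\sum_{s}\frac1p\sum_{\xi\ne0}|\widehat B(\xi)|^2\prod_{i=1}^k|\widehat A(s_i\xi)|^2 .
\]
It therefore suffices to make the right-hand side smaller than $p^k$. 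For fixed $\xi\neq0$, the bound $\sum_s|\widehat A(s\xi)|^2\le p|A|$ alone gives the condition $|B||A|^k\gg p^{k+1}$, which is too weak. The gain has to come from not treating the $k$ factors independently. Concretely, I would apply Hölder and Cauchy--Schwarz to move to fourth moments $\sum_s|\widehat A(s\xi)|^4$. These equal $p$ times the number of solutions of $a_1-a_2=s(a_3-a_4)$, a collinear-quadruple/energy count, which the same incidence theorem bounds by roughly $|A|^4/p+|A|^{5/2}$ in the relevant range. I would also use the sup bound $\sum_{\xi\neq0}|\widehat B(\xi)|^2\le p|B|$, together with the fact that $|B|$ is already comparable to $p$ after the growth step.

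\emph{Optimisation and the main obstacle.} Finally, I would choose $m$ and $k=n-m$ so that the growth step reaches $|B|\gg p^{1-o(1)}$ and the covering inequality holds. Equating the two resulting constraints should give $|A|\gg p^{3/(2n-1)+\eta}$; the $3/2$-type incidence exponent and the $2n-1$ count of effective copies are what drive this. The main obstacle is the covering step. The naive second-moment bound only needs $|A|$ to be large in a way that factorizes over the $s_i$, and that reproduces the $p^{2/n}$ scale. Beating it requires an incidence-type fourth-moment estimate for $\widehat A$ along lines $\{s\xi\}$ that is uniform in $\xi$, while keeping every $s_i$ nonzero. I would try the Stevens--de Zeeuw point--line bound first, with dyadic pigeonholing over the sizes of $|\widehat A(s\xi)|$ to handle the few exceptional $\xi$ at which $\widehat A$ is large.
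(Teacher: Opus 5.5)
Your proposal has a genuine gap in the covering step, and as set up that step cannot get below the $p^{2/n}$ scale. For fixed $\xi\neq0$, $s\xi$ runs over $\mathbb F_p^\times$ as $s$ does, so $\sum_{s}|\widehat A(s\xi)|^2=p|A|-|A|^2$ exactly. Averaging over $(s_1,\dots,s_k)$ therefore gives, with no slack,
$\sum_{\mathbf s}\#\{x:r(x)=0\}\lesssim p^k|H|/|A|^k$, where $H=\mathbb F_p\setminus B$. Here you should use $\widehat{1_B}=-\widehat{1_H}$ off $0$. Your own version, with $\sum|\widehat{1_B}|^2\le p|B|$, gives the condition $|B||A|^k\gg p^2$, not $p^{k+1}$. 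With $|H|\approx p^2/|A|^{n-k}$ from the growth step, this is exactly $|A|^n\gg p^2$ for every split $m+k=n$. No H\"older step in $s$ can improve an identity that is exact for each $\xi$; the loss is in Chebyshev itself. The fourth moment you propose is also not a dilation count: $\sum_{s\neq0}|\widehat A(s\xi)|^4=\sum_{\zeta\neq0}|\widehat A(\zeta)|^4=pE_+(A)-|A|^4$ is the ordinary additive energy. For an arithmetic progression this is $\asymp p|A|^3$, so the claimed bound $|A|^4/p+|A|^{5/2}$ is false in general. Finally, the step ``equating the constraints should give $3/(2n-1)$'' is never carried out.

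The missing idea is to use the contradiction hypothesis geometrically, with a single final dilate ($k=1$). The paper chooses $t_2,\dots,t_{n-1}$ by averaging the energy of $A+t_2A+\cdots+t_{n-1}A$ jointly over $(\mathbb F_p^\times)^{n-2}$. This controls the \emph{complement}: $|H|\le p^2/|A|^{n-1}+1$. Merely $|B|\gg p$ is not enough. Iterating your one-dilate Cauchy--Schwarz bound cannot push the complement below roughly $p/|A|$, which is too weak for $n\ge6$. If $|H|<|A|$, any dilate $s$ works. Otherwise, suppose every $s\neq0$ leaves some $x_s\notin S+sA$. Then $x_s-sA\subseteq H$, so the $p-1$ distinct lines $v=x_s-su$ each contain $|A|$ points of $A\times H$. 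Stevens--de Zeeuw applies, since its hypotheses $|A|\le|H|$ and $|A||H|^2\le(p-1)^3$ hold here for large $p$. It gives $|A|p\ll|A|^{3/4}|H|^{1/2}p^{3/4}+p$, i.e.\ $|H|^2\gg|A|p$. This contradicts $|H|\le p^2/|A|^{n-1}+1$ once $|A|^{2n-1}/p^3\to\infty$. That inequality, not a balancing of $m$ against $k$, is the source of the exponent $3/(2n-1)$.
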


Theorem~\ref{thm:dilated-n-fold-covering-improved} suggests that the product
structure of \(A^n\) should also be relevant in the Euclidean setting.  This
motivates the following conjecture.

\begin{conjecture}
\label{conj:linear-projection-interior}
For every \(n\ge 3\), there exists \(\varepsilon_n>0\) such that the following
holds.  If \(A\subseteq \mathbb R\) is a Borel set satisfying
\[
    \dim_H A>\frac{2}{n}-\varepsilon_n,
\]
then there exist \(t_2,\ldots,t_n\in\mathbb R\) such that
$A+t_2A+\cdots+t_nA$
contains a nonempty open interval.
\end{conjecture}

\subsection{Product-type projections}

We next consider the shifted product analogue.  For \(A\subseteq\mathbb F_p\),
the problem is to choose shifts \(t_1,\ldots,t_n\in\mathbb F_p\) so that
\[
    (t_1 + A)(t_2 + A)\cdots(t_n + A)=\mathbb F_p.
\]
The first result is the two-fold analogue of
Theorem~\ref{thm:half-density-dilated-sumset-covering}.

\begin{thm}
\label{thm:half-density-shifted-product-covering}
There is an absolute constant \(\varepsilon_0>0\) such that, for all sufficiently
large primes \(p\), the following holds.  If \(A\subseteq \mathbb F_p\)
satisfies
\[
    |A|>\left(\frac12-\varepsilon_0\right)p,
\]
then there exist \(s,t\in\mathbb F_p\) such that
$(A+s)(A+t)=\mathbb F_p$.
For example, one may take \(\varepsilon_0=10^{-3}\), provided \(p\) is
sufficiently large.
\end{thm}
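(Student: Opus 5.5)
The plan is a second-moment (variance) argument over the shifts \(s,t\), with multiplicative characters doing the Fourier analysis. The bare count turns out to sit exactly at the half-density threshold, so the real work is a final step that gains a constant factor; I have not carried that step out.

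\emph{Setup.} Write \(\alpha=|A|/p\), so \(\alpha>\tfrac12-\varepsilon_0\). For \(x\in\mathbb F_p\) and shifts \(s,t\) put
\[
N_{s,t}(x)=\#\{(a,a')\in A^2:(a+s)(a'+t)=x\}.
\]
Then \(x\in(A+s)(A+t)\) if and only if \(N_{s,t}(x)>0\). The point \(x=0\) is covered as soon as \(-s\in A\), so I will restrict the average to \(s\in -A\) (or simply absorb \(x=0\) as an \(O(1/p)\) correction). It then suffices to show that the average over \((s,t)\) of the number \(U_{s,t}\) of uncovered \(x\in\mathbb F_p^\times\) is strictly less than \(1\).

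\emph{Character expansion.} For \(x\neq0\), orthogonality of multiplicative characters \(\chi\) of \(\mathbb F_p^\times\) gives
\[
N_{s,t}(x)=\frac{1}{p-1}\sum_{\chi}S_s(\chi)S_t(\chi)\overline{\chi}(x),
\qquad S_s(\chi)=\sum_{a\in A}\chi(a+s),
\]
with the convention \(\chi(0)=0\). The trivial character gives the main term \(\mu\approx\alpha^2p\). By Chebyshev, \(\mu^2U_{s,t}\le\sum_{x\ne0}(N_{s,t}(x)-\mu)^2\), and Parseval in \(x\) gives
\[
\sum_{x\neq 0}(N_{s,t}(x)-\mu)^2=\frac{1}{p-1}\sum_{\chi\neq1}|S_s(\chi)|^2|S_t(\chi)|^2 .
\]
Averaging over \(s\) and \(t\) independently, the key identity is that for \(\chi\ne1\),
\[
\sum_s\chi(a+s)\overline{\chi}(a'+s)=-1 \quad\text{for } a\ne a',
\]
and it equals \(p-1\) for \(a=a'\). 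Hence \(\sum_s|S_s(\chi)|^2=p|A|-|A|^2=p^2\alpha(1-\alpha)\) exactly, independently of the structure of \(A\). With the lower-order terms tracked, this yields
\[
\mathbb E_{s,t}\,U_{s,t}\;\le\;\frac{(1-\alpha)^2}{\alpha^2}+O(p^{-1}).
\]
At \(\alpha=\tfrac12\) this is essentially \(1\), and at \(\alpha=\tfrac12-\varepsilon_0\) it is about \(1+8\varepsilon_0\). So the bare second moment reproduces the half-density threshold but falls just short of it.

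\emph{Main obstacle: gaining a constant factor.} The loss is in the Chebyshev step, which charges nothing to the variance carried by the covered points. Since \(\sum_s|S_s(\chi)|^2\) is rigid, the gain cannot come from the \(L^2\) mass of the character sums. It must come from showing that the variance \(\sum_x(N-\mu)^2\) is not concentrated on the uncovered \(x\).

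The first thing I would try is a dichotomy on the typical size of \(|S_s(\chi)|\).
\begin{itemize}
\item If \(\max_{\chi\ne1}|S_s(\chi)|\) is typically \(\le(1-\delta)\alpha p\), then a fourth-moment/Hölder estimate shows that \(N_{s,t}\) takes values well away from \(0\) and \(2\mu\) on a positive proportion of the covered \(x\). That portion of the variance is then not spent on uncovered points, which improves the average bound to \((1-c\delta)(1-\alpha)^2/\alpha^2<1\) once \(\varepsilon_0\) is small.
\item If instead some \(|S_s(\chi)|\) is nearly maximal for many \(s\), then a stability argument (as in the proof of Theorem~\ref{thm:half-density-dilated-sumset-covering}) forces \(A\) to be close to a union of cosets of a subgroup of \(\mathbb F_p^\times\) shifted by a constant. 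Density near \(\tfrac12\) essentially forces the shifted quadratic residues \(c+\{\text{squares}\}\). For such near-extremal sets I would verify coverage directly, by choosing \(s=-c\) and \(t\) so that \(A+t\) contains both residues and nonresidues in bulk.
\end{itemize}

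Making the dichotomy quantitative, with \(\varepsilon_0\) as small as \(10^{-3}\) and \(p\) large, is where I expect the real difficulty to lie.
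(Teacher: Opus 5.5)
Your character computation is correct as far as it goes, provided you really restrict to $s\in -A$. Averaging over $t$ first makes the $\chi$-dependence drop out, and the bound $\mathbb E\,U_{s,t}\le(1-\alpha)^2/\alpha^2+O(p^{-1})$ follows. The alternative of absorbing $x=0$ as an $O(1/p)$ error is wrong: for uniform $(s,t)$ the point $0$ is missed with probability $(1-\alpha)^2\approx\frac14$.

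\textbf{The gap.} This bound exceeds $1$ on the whole range $|A|<p/2$ that the theorem is about. What you actually prove is only the trivial half-density case, and the step you leave open is the entire content of the statement. The proposed dichotomy does not reduce it.

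\emph{Your second branch is essentially empty.} At density about $\frac12$, a nontrivial $\chi$ with $|S_s(\chi)|\ge(1-\delta)|A|$ must be the quadratic character; for characters of order at least $3$, the ratio $|S_s(\chi)|/|A|$ is at most about $0.71$. Then $A+s$ is close to the set $Q$ of nonzero squares or to $\mathbb F_p^\times\setminus Q$. This can happen for at most one $s$, because $Q$ meets every nontrivial translate of $Q$ and of $\mathbb F_p^\times\setminus Q$ in $p/4+O(1)$ points.

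\emph{So your first branch is the whole problem, and its hypothesis is far too weak.} The hypothesis of the first branch holds for every $A$. Note that $|S_s(\chi)|^2$ has mean $\alpha(1-\alpha)p$ over $s$. Inserting $\max_{\chi\ne1}|S_s(\chi)|\le(1-\delta)\alpha p$ into $\frac1{p-1}\sum_{\chi\ne1}|S_s(\chi)|^2|S_t(\chi)|^2$ gives about $(1-\delta)^2\alpha^3(1-\alpha)p^3$. That is a factor $p$ above the target $\mu^2\approx\alpha^4p^2$. What is needed is that a definite share of $\sum_x(N_{s,t}(x)-\mu)^2$ lies on covered $x$, and no estimate delivering this is given. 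Also, Chebyshev is sharp exactly when covered points have $N_{s,t}(x)\approx\mu$, so covered points near $2\mu$ would help rather than hurt.

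\textbf{The missing idea.} The paper uses the combinatorial content of a single missing point rather than Fourier statistics.
\begin{enumerate}
\item Fix $s=-a_0$ with $a_0\in A$, so $X=A-a_0$ contains $0$ and $0$ is always covered.
\item If $X(A+u)\ne\mathbb F_p$ for every $u$, a missing $x_u\ne0$ forces $x_uC^{-1}-u\subseteq\mathbb F_p\setminus A$, where $C=X\setminus\{0\}$.
\item Both sets have size about $p/2$, so this nearly pins down $\mathbb F_p\setminus A$. Concretely, $D:=x_0C^{-1}$ satisfies $|D\triangle g_u(D)|\le p\delta$, with $\delta<4\varepsilon_0+2/p$, for all $p$ affine maps $g_u(x)=(x_u/x_0)x-u$.
\item Since the $g_u$ have distinct translation parts, each $g_w^{-1}g_ug_v$ with $w=u+(x_u/x_0)v$ is a dilation.
\item If one of these is a dilation by some $\rho\ne1$, conjugating it by the $g_r$ gives $|D\triangle(\rho D+b)|\le5p\delta$ for every $b\in\mathbb F_p$.
\item Otherwise $\{g_u\}$ is a subgroup of order $p$ of the affine group, hence consists of translations, and $|D\triangle(D-u)|\le p\delta$ for every $u$.
\item Averaging over $b$ or $u$ gives $2|D|(1-|D|/p)\approx p/2$ on the left, contradicting $\delta=O(\varepsilon_0)$.
\end{enumerate}

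The constant-factor gain below half density comes from approximate invariance of a set of density $\frac12$ under a $p$-element family of affine maps. The identity $\sum_s|S_s(\chi)|^2=p|A|-|A|^2$ is blind to the structure of $A$, so it cannot see this. Your instinct to invoke a stability argument as in Theorem~\ref{thm:half-density-dilated-sumset-covering} is right. But it must be applied directly to the sets $x_uC^{-1}-u$ that the missing points force into $\mathbb F_p\setminus A$, not to near-extremal character sums.
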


As in the linear case, this gives a small improvement below the trivial
half-density range.  The corresponding \(n\)-fold shifted product result holds
at the same density scale as Theorem~\ref{thm:dilated-n-fold-covering-improved}.

\begin{thm}
\label{thm:shifted-product-covering}
Let \(n\ge 3\) and let \(\eta>0\).  Then there exist constants
\(C=C(n,\eta)>0\) and \(p_0=p_0(n,\eta)\) such that the following holds for
every prime \(p\ge p_0\).

If \(A\subseteq\mathbb F_p\) satisfies
\[
    |A|\ge C p^{\frac{3}{2n-1}+\eta},
\]
then there exist \(t_1,\ldots,t_n\in\mathbb F_p\) such that
$(t_1 + A)(t_2 + A)\cdots(t_n + A)=\mathbb F_p$.
\end{thm}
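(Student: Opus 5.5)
Work multiplicatively. Since \(|A|\ge Cp^{3/(2n-1)+\eta}\) is large, at most one element of \(t+A\) vanishes. Remove that element; this loses nothing essential. Let \(B_i=(t_i+A)\setminus\{0\}\subseteq\mathbb F_p^\times\). Showing \(B_1\cdots B_n=\mathbb F_p^\times\) handles every nonzero target. The target \(0\) is obtained by choosing \(t_1=-a\) for some \(a\in A\).

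The plan is to reduce the statement to a Fourier/character-sum covering criterion on the cyclic group \(\mathbb F_p^\times\), and then choose the shifts at random.

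\textbf{Covering criterion.} For \(x\in\mathbb F_p^\times\), write the number of representations \(x=b_1\cdots b_n\) with \(b_i\in B_i\) as
\[
N(x)=\frac{1}{p-1}\sum_{\chi}\bar\chi(x)\prod_{i=1}^n \widehat{B_i}(\chi),\qquad \widehat{B_i}(\chi)=\sum_{b\in B_i}\chi(b),
\]
summing over multiplicative characters \(\chi\). The trivial character contributes about \(|A|^n/p\). It suffices to show that the nontrivial part has modulus less than this main term.

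Apply Hölder to the product of the nontrivial terms. Put \(\sup_{\chi\ne1}|\widehat{B_i}(\chi)|\) on \(n-2\) of the factors and \(L^2\) on the remaining two. By Parseval, \(\sum_\chi|\widehat{B_i}(\chi)|^2\le p|A|\). This reduces the problem to finding shifts with
\[
\prod_{i=3}^{n}\sup_{\chi\ne 1}|\widehat{B_i}(\chi)| \ll |A|^{n-1}p^{-1}.
\]
To reach exponent \(3/(2n-1)\) rather than \(2/n\), the pure sup/\(L^2\) split will not suffice. Instead I will use higher moments: the \(L^{2k}\) norms \(\sum_{\chi}|\widehat{B_i}(\chi)|^{2k}\) average over shifts much better than the sup norm.

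\textbf{Averaging over shifts.} The key identity is the following. Averaging over \(t\in\mathbb F_p\),
\[
\sum_t\sum_{\chi\ne1}|\widehat{(t+A)}(\chi)|^4
\]
counts solutions of \((t+a_1)(t+a_2)=(t+a_3)(t+a_4)\). This is a shifted-multiplicative-energy count. The equation is linear in \(t\) unless \(a_1+a_2=a_3+a_4\), so the total is \(\ll p|A|^4+p^2E_+(A)\). Equivalently, it is an incidence count between points and the lines/hyperbolas \(t\mapsto\) ratio, and the Stevens--de Zeeuw / Rudnev point--plane incidence bound gives a nontrivial saving for \(|A|<p^{2/3}\). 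Using this as a fourth-moment bound for each \(i\), choose \(t_1,\dots,t_n\) independently. By Markov, there are shifts for which every \(\|\widehat{B_i}\|_{L^4}^4\) is at most a constant times its average.

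Then bound the error by Hölder with exponents chosen so that the \(n\) factors split as two \(L^2\) factors and \(n-2\) \(L^4\)-type factors, interpolated appropriately. Balancing the Parseval bound \(p|A|\) against the incidence-improved fourth moment, roughly \(|A|^{3/2}p\cdot\) (energy saving), yields the condition \(|A|^{2n-1}\gg p^{3+\epsilon}\), that is, \(|A|\gg p^{3/(2n-1)+\eta}\). The \(\eta\) loss absorbs the logarithmic factors, together with a dyadic pigeonholing over the sizes of \(|\widehat{B_i}(\chi)|\) that is needed to make the Hölder step sharp.

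\textbf{Main obstacle.} The hardest step is the averaged fourth-moment estimate over shifts, which must hold uniformly over all \(A\). The degenerate contribution from additive energy \(E_+(A)\) (arithmetic-progression-like \(A\)) must be controlled. I will handle it by treating the diagonal \(a_1+a_2=a_3+a_4\) separately. It contributes \(p\cdot E_+(A)\le p|A|^3\) only after the correct normalization. The point is that for fixed \((a_i)\) with distinct sums, \(t\) is determined, so the count is at most \(|A|^4\) plus the diagonal. I will verify that this crude bound already produces the exponent \(3/(2n-1)\). If it falls short, I will fall back on the point--plane incidence theorem.

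The linear analogue, Theorem~\ref{thm:dilated-n-fold-covering-improved}, presumably follows the same scheme with additive characters. I would mirror that argument step by step.
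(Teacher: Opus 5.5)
Your proposal has a genuine gap. The character-sum route you describe cannot reach the exponent $\frac{3}{2n-1}$, and the step ``balancing \ldots\ yields $|A|^{2n-1}\gg p^{3+\epsilon}$'' is asserted rather than computed.

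Your sufficient condition is $\sum_{\chi\neq1}\prod_{i=1}^n|\widehat{B_i}(\chi)|<\prod_i|B_i|\approx|A|^n$. Every H\"older split you propose ($L^2$, $L^4$, sup) uses exponents $q_i\in[2,\infty]$ with $\sum_i 1/q_i=1$. For such exponents,
\[
\|\widehat{B}\|_{\ell^{q}(\chi\neq1)}\ge(p-2)^{1/q-1/2}\|\widehat{B}\|_{\ell^{2}(\chi\neq1)},
\qquad
\|\widehat{B}\|_{\ell^{2}(\chi\neq1)}^2=(p-1)|B|-|B|^2\asymp p|A|.
\]
So the H\"older bound is always $\gg p^{1-n/2}(p|A|)^{n/2}=p|A|^{n/2}$. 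This is smaller than $|A|^n$ only when $|A|\gg p^{2/n}$, which is exactly the Peres--Schlag--Chen scale the theorem is meant to beat. For random-like $A$ one expects $|\widehat{B_i}(\chi)|\approx|A|^{1/2}$ for typical $\chi$ and every shift, so the error sum itself has this size, and dyadic pigeonholing cannot rescue it.

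Nor is there an ``incidence-improved'' fourth moment. If $a_1+a_2=a_3+a_4$, the equation $(t+a_1)(t+a_2)=(t+a_3)(t+a_4)$ forces $a_1a_2=a_3a_4$, hence $\{a_1,a_2\}=\{a_3,a_4\}$. So no $E_+(A)$ term appears, and $\sum_t E_\times(t+A)\le|A|^4+2p|A|^2$. The resulting average $\sum_{\chi\ne1}|\widehat{(t+A)}(\chi)|^4\ll p|A|^2$ is already the square-root-cancellation level forced by Cauchy--Schwarz and Parseval, so no incidence theorem can improve it. Plugged into a $(2,4,4)$ H\"older split for $n=3$, it gives $p|A|^{3/2}<|A|^3$, i.e.\ $|A|\gg p^{2/3}$, not $p^{3/5}$.

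What is missing is the mechanism that actually beats the square-root barrier. Instead of proving that every $x$ has a positive representation count, the paper exploits the freedom in the last shift and argues by contradiction with an incidence bound. Averaging is used only for the first $n-1$ factors. With $t_1=-a_0$ and $t_2,\dots,t_{n-1}$ chosen by averaging the multiplicative energy (a pure $L^2$ argument), the product $S=(t_1+A)\cdots(t_{n-1}+A)$ contains $0$. Its nonzero complement $H$ satisfies $|H|\ll_n p^2/|A|^{n-1}+\min\{|A|,p/|A|\}+1$.

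Now suppose $S(t+A)\ne\mathbb F_p$ for every $t$, and pick a missing $x_t\in\mathbb F_p^\times$. Then $x_t^{-1}(t+a)\in H^{-1}$ whenever $t+a\neq0$. So the $p$ distinct lines $v=x_t^{-1}(u+t)$ each carry at least $|A|-1$ points of $A\times H^{-1}$. The Stevens--de Zeeuw Cartesian-product bound $\mathcal I\ll|A|^{3/4}|H|^{1/2}p^{3/4}+p$ contradicts the lower bound $(|A|-1)p$ as soon as $|H|=o((|A|p)^{1/2})$. The condition $p^2/|A|^{n-1}=o((|A|p)^{1/2})$ is precisely $|A|^{2n-1}\gg p^{3+\epsilon}$.
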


There is also a continuous counterpart of the product-type problem.  This leads to the following Euclidean product-type theorem.

\begin{thm}
\label{thm:continuous-shifted-product-interior}
Let \(n\ge 3\), and let \(A\subseteq \mathbb R\) be a Borel set satisfying
\[
    \dim_H A>\frac{2}{n}.
\]
Then there exist \(t_1,\ldots,t_n\in\mathbb R\) such that
$(t_1 + A)(t_2 + A)\cdots(t_n + A)$
contains a nonempty open interval.
\end{thm}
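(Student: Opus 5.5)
We reduce to the Peres--Schlag theorem for linear projections, using logarithms to turn shifted products into sums. Set \(s=\dim_H A>2/n\).

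\textbf{Step 1 (positivity of the shifted factors).} Choose a shift \(t\) so large that \(t+A\cap[-R,R]\subseteq[1,\infty)\), where \(R\) is chosen with \(\dim_H(A\cap[-R,R])>2/n\). This is possible by countable stability of Hausdorff dimension. Shrinking \(A\) only shrinks the product, so we may assume \(A\) is bounded. We may also pass to a compact subset \(K\subseteq A\) with \(\dim_H K>2/n\), since Borel sets contain compact subsets of nearly full dimension. Then the map \(\phi_t(x)=\log(t+x)\) is a bi-Lipschitz diffeomorphism on a neighbourhood of \(K\), so \(\dim_H \phi_t(K)=\dim_H K\).

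\textbf{Step 2 (product dimension and the linear projection theorem).} Consider \(E=\phi_{t}(K)^n\subseteq\mathbb R^n\). Using \(\dim_H(X\times Y)\ge\dim_H X+\dim_H Y\) (Marstrand's product inequality), we get \(\dim_H E\ge n s>2\). By the Peres--Schlag theorem with \(k=1\), for almost every direction \(e=(e_1,\dots,e_n)\in S^{n-1}\), the projection \(\sum e_i y_i\) of \(E\) contains an interval.

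The difficulty is that we need the direction \((1,\dots,1)\), which is a null set of directions. The fix is to exploit the freedom in the shifts: allow different shifts \(t_i\) in each coordinate. We then consider the family of curved projections
\[
\Phi_{\mathbf t}(x_1,\dots,x_n)=\sum_i \log(t_i+x_i),
\]
parameterized by \(\mathbf t\in U\subseteq\mathbb R^n\), with \(U\) an open set of large shifts.

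\textbf{Step 3 (transversality for the generalized projections).} The main step is to verify that \(\{\Phi_{\mathbf t}\}\) is a family of generalized projections in the sense of Peres--Schlag. Concretely, we need the transversality condition: for \(x\neq y\) in \(K^n\), whenever \(\Phi_{\mathbf t}(x)-\Phi_{\mathbf t}(y)\) is small relative to \(|x-y|\), the \(\mathbf t\)-gradient of this difference has size \(\gtrsim|x-y|\). The gradient is
\[
\partial_{t_i}\bigl(\Phi_{\mathbf t}(x)-\Phi_{\mathbf t}(y)\bigr)=\frac{1}{t_i+x_i}-\frac{1}{t_i+y_i}=\frac{y_i-x_i}{(t_i+x_i)(t_i+y_i)}.
\]
Its norm is therefore comparable to \(|x-y|\), uniformly on \(U\times K^n\), since all \(t_i+x_i\) lie in a compact subset of \((0,\infty)\). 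This gives transversality of order \(0\), and the higher-derivative regularity bounds are immediate from smoothness.

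Peres--Schlag's Sobolev-dimension theorem then applies. Take a Frostman measure \(\mu\) on \(K\) with \(I_\sigma(\mu)<\infty\) for some \(2/n<\sigma<s\), and let \(\nu=\mu^{\otimes n}\). Then \(I_{n\sigma}(\nu)\lesssim I_\sigma(\mu)^n<\infty\) is not immediate, because the kernel \(|x-y|^{-n\sigma}\) does not factor. Instead I would bound the energy via the product Fourier decay, or directly use that \(\nu\) satisfies \(\nu(B(x,r))\lesssim r^{n\sigma}\) and pass to a slightly smaller exponent. Since \(n\sigma>2\), this yields that \((\Phi_{\mathbf t})_*\nu\) has an \(L^2\) density in the Sobolev space \(H^{\gamma}\) with \(\gamma>1/2\) for a.e. \(\mathbf t\in U\). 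Such densities are continuous, so the image \(\Phi_{\mathbf t}(K^n)\) contains an interval.

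\textbf{Step 4 (conclusion).} Fix such a \(\mathbf t\). Then \(\exp\) maps an interval inside \(\Phi_{\mathbf t}(K^n)\) onto an interval inside \(\prod_i(t_i+K)\subseteq(t_1+A)\cdots(t_n+A)\), which proves the theorem.

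The main obstacle is Step 3: checking that the Peres--Schlag hypotheses, namely transversality together with the \(\beta\)-regularity bounds on derivatives, hold uniformly for the \(n\)-parameter family. It also requires handling the dimension bookkeeping for the product measure so that the Sobolev exponent \(\gamma=(n\sigma-2)/2>0\) is positive, which is where \(\dim_H A>2/n\) enters.
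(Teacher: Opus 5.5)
Your argument is correct in substance, but it reaches the key averaged $L^2$-Sobolev bound by a different route from the paper.

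\textbf{The shared part.} Both proofs study the same measure, $(\Phi_{\mathbf t})_\#\mu^{\otimes n}=\nu_{t_1}*\cdots*\nu_{t_n}$, and both conclude from an $H^\gamma$ density with $\gamma>1/2$.

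\textbf{The paper's route.} It never leaves dimension one. A one-variable van der Corput bound gives $\int_R^{2R}|\widehat{\nu_t}(\xi)|^2\,dt\ll(1+|\xi|)^{-\sigma}I_\sigma(\mu)$, using $\sigma<1$. Since the Fourier transform of the convolution is $\prod_j\widehat{\nu_{t_j}}(\xi)$, the average over $[R,2R]^n$ is a product of $n$ independent one-dimensional averages, which gives decay $(1+|\xi|)^{-n\sigma}$.

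\textbf{Your route.} You treat $\Phi_{\mathbf t}$ as a generalized projection of $K^n\subseteq\mathbb R^n$ with an $n$-dimensional parameter. The oscillatory bound is then in terms of $|x-y|$ in $\mathbb R^n$. The product structure re-enters only through $\mu^{\otimes n}(B(x,r))\lesssim r^{n\sigma}$, which gives $I_\alpha(\mu^{\otimes n})<\infty$ for some $\alpha>2$.

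\textbf{What each buys.} Your approach is more general. The same argument shows that $\{\prod_i(t_i+x_i):x\in E\}$ has interior for a.e.\ $\mathbf t$ whenever $E\subseteq\mathbb R^n$ is compact with $\dim_H E>2$ and all $t_i+x_i>0$, not only for $E=K^n$. The cost is twofold. You need a ball condition on $\mu$, not just $I_\sigma(\mu)<\infty$, since energies do not tensorize. You also need a multi-parameter version of Peres--Schlag, which deserves care. Their theorem is formulated for a one-parameter family $\lambda\in J\subseteq\mathbb R$. You cannot simply restrict to a curve of shifts: for $n\ge3$, the limiting functionals $u\mapsto\sum_i u_i/(t_i+x_i)$ and its derivative along the curve always share a nonzero kernel, so one-parameter transversality cannot be expected.

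\textbf{How to close that step.} Here the multi-parameter estimate is easy to prove directly. We have $|\nabla_{\mathbf t}(\Phi_{\mathbf t}(x)-\Phi_{\mathbf t}(y))|\asymp|x-y|$, and all higher $\mathbf t$-derivatives are $O(|x-y|)$. Integrating by parts against a smooth cutoff $\chi$ then gives $\int|\widehat{(\Phi_{\mathbf t})_\#\nu}(\xi)|^2\chi(\mathbf t)\,d\mathbf t\lesssim_N\iint(1+|\xi||x-y|)^{-N}\,d\nu(x)\,d\nu(y)$. Integrating this against $(1+|\xi|)^{2\gamma}$ gives $I_{1+2\gamma}(\nu)$. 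Note that this $\mathbf t$-integral actually factors over the coordinates; exploiting that factorization is exactly the paper's proof.

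\textbf{Two minor corrections.}
\begin{enumerate}
\item Step 2 (Marstrand's product inequality and the linear Peres--Schlag theorem) plays no role in the argument and can be dropped.
\item Your final bookkeeping is off. Finite $\alpha$-energy with $\alpha<n\sigma$ allows any $\gamma<(n\sigma-1)/2$, so $\gamma>1/2$ is available exactly when $n\sigma>2$. The quantity $(n\sigma-2)/2$ is the room above $1/2$, not $\gamma$ itself, and $\gamma>0$ alone would not give a continuous density.
\end{enumerate}
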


Theorem~\ref{thm:continuous-shifted-product-interior} reaches the direct
threshold \(\dim_H A>2/n\).  In analogy with the finite-field results above,
one expects this threshold not to be optimal for product-type projections.  We
record this expected improvement as a conjecture.

\begin{conjecture}
\label{conj:shifted-product-interior}
For every \(n\ge 3\), there exists \(\varepsilon_n>0\) such that the following
holds.  If \(A\subseteq \mathbb R\) is a Borel set satisfying
\[
    \dim_H A>\frac{2}{n}-\varepsilon_n,
\]
then there exist \(t_1,\ldots,t_n\in\mathbb R\) such that
$ (t_1 + A)(t_2 + A)\cdots(t_n + A)$
contains a nonempty open interval.
\end{conjecture}

\subsection*{Organization}

In Section~\ref{section:Preliminaries}, we collect the notation and the background material from the
incidence geometry and additive combinatorics.  In
Section~\ref{section:Linear projections}, we prove the two linear projection
results, Theorems~\ref{thm:half-density-dilated-sumset-covering} and
\ref{thm:dilated-n-fold-covering-improved}.  In
Section~\ref{section:Product-type projections}, we prove the finite-field
shifted product results, Theorems~\ref{thm:half-density-shifted-product-covering}
and \ref{thm:shifted-product-covering}.  Finally, in
Section~\ref{section:Euclidean product-type projections}, we prove
Theorem~\ref{thm:continuous-shifted-product-interior}.


\section{Preliminaries} \label{section:Preliminaries}

\subsection{Notation and asymptotic conventions}
Throughout the paper, \(p\) denotes a prime number, and all subsets are subsets
of \(\mathbb F_p\) unless otherwise specified.  We write
\(\mathbb F_p^\times:=\mathbb F_p\setminus\{0\}\).  For a finite set \(X\), we
write \(|X|\) for its cardinality.  For two sets \(A,B\), we write
\[
    A\triangle B:=(A\setminus B)\cup(B\setminus A)
\]
for their symmetric difference.

All asymptotic notation is taken as \(p\to\infty\) through primes.  The
parameters appearing in the statement of a theorem, such as \(n\), \(\eta\),
and \(\varepsilon_0\), are regarded as fixed, while the set \(A\) may depend on
\(p\).

We use Vinogradov notation for upper bounds: \(F\ll G\) means that
$|F|\le C G$
for all sufficiently large \(p\), with an absolute constant \(C>0\).  Subscripts
indicate the permitted dependence of the implicit constant; for instance,
\(F\ll_n G\) means that the implicit constant may depend on \(n\), but not on
\(p\) or on \(A\).  We write \(F\gg G\) to mean \(G\ll F\).

If \(G=G(p)>0\), then \(F=o(G)\) means
\[
    \frac{F}{G}\to0
    \qquad\text{as }p\to\infty.
\]

\subsection{Cauchy--Davenport inequality}
We record the Cauchy--Davenport inequality (see, e.g., \cite[Chapter~5]{MR2289012}).
\begin{prop} \label{prop:CD-ineq}
    If $p$ is a prime and $A,B\subseteq \mathbb F_p$ are subsets, then\begin{align*}
        |A+B|\geq\min\{|A|+|B|-1,\,p\}.
    \end{align*} 
    Moreover, if $A', B' \subseteq \mathbb F_p^{\times}$ are subsets with $|A'|+|B'|>p-1$, then $A'B'=\mathbb F_p^{\times}$.

\end{prop}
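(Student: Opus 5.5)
This is Cauchy--Davenport, a classical result we are only recording; the proof I would give is the standard one via the Dyson $e$-transform, with the multiplicative statement deduced from a pigeonhole argument.

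\emph{First claim.} I argue by induction on $|B|$. If $|B|=1$, then $|A+B|=|A|$ and there is nothing to prove. Suppose $|B|\ge2$ and $A+B\neq\mathbb F_p$. Choose distinct $b,b'\in B$. Since $p$ is prime, $\langle b'-b\rangle=\mathbb F_p$. I claim that after a translation we may assume $A\cap B\neq\emptyset$ and $B\not\subseteq A$.
\begin{itemize}
\item If $A+B-B$ contained $A+\langle b'-b\rangle$ for every translate, we would get $A+\mathbb F_p\subseteq A$, i.e.\ $A=\mathbb F_p$. In that case $A+B=\mathbb F_p$, which we have excluded.
\item So some $x\in A$ has $x+(b'-b)\notin A$. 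Translating $B$ by $x-b$ then gives $A\cap B\ni x$ while $B\not\subseteq A$.
\end{itemize}
Now apply the Dyson transform with $e=0$:
\[
A'=A\cup B,\qquad B'=A\cap B.
\]
Then $A'+B'\subseteq A+B$ and $|A'|+|B'|=|A|+|B|$. Also $1\le|B'|<|B|$. The induction hypothesis gives
\[
|A+B|\ge|A'+B'|\ge\min\{|A'|+|B'|-1,p\}=\min\{|A|+|B|-1,p\}.
\]
The only delicate step is arranging the translation so that $B'$ is a nonempty proper subset of $B$, and this is exactly where primality of $p$ enters.

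\emph{Second claim.} This is just pigeonhole in the group $\mathbb F_p^\times$ of order $p-1$. Fix $y\in\mathbb F_p^\times$. The sets $A'$ and $yB'^{-1}=\{yb^{-1}:b\in B'\}$ both lie in $\mathbb F_p^\times$ and have total size $|A'|+|B'|>p-1$, so they intersect. This gives $a=yb^{-1}$ for some $a\in A'$ and $b\in B'$, hence $y=ab\in A'B'$. Conversely $A'B'\subseteq\mathbb F_p^\times$ because $\mathbb F_p$ has no zero divisors. Therefore $A'B'=\mathbb F_p^\times$.
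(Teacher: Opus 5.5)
Your proof is correct and is the standard one. The paper gives no proof of this proposition; it simply cites Tao--Vu (Chapter~5), where Cauchy--Davenport is proved by this same $e$-transform induction. Your pigeonhole argument for the multiplicative part is exactly the one the paper runs inline, with $\lambda Y^{-1}$, in the proof of Theorem~\ref{thm:half-density-shifted-product-covering}.

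Two small repairs are needed.

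\emph{First bullet.} As written it is garbled. The statement you need is: if $x+d\in A$ for every $x\in A$, where $d=b'-b\neq0$, then $A+\langle d\rangle\subseteq A$. Since $\langle d\rangle=\mathbb F_p$ and $A\neq\varnothing$, this forces $A=\mathbb F_p$, contradicting $A+B\neq\mathbb F_p$.

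\emph{Nonemptiness.} You should state that $A$ and $B$ are nonempty. Your base case and the first bullet use this silently, and without it the inequality fails (e.g.\ $A=\varnothing$, $|B|\ge2$).
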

\subsection{Stevens--de Zeeuw Cartesian-product incidence estimate}
We will use the following incidence estimate for Cartesian-product sets over finite fields due to Stevens--de Zeeuw \cite{MR3742451}.
\begin{prop}
\label{prop:sdz-cartesian-product-incidence}
Let \(X,Y\subseteq\mathbb F_p\) with \(|X|\le |Y|\), and let \(\mathcal L\)
be a finite set of lines in \(\mathbb F_p^2\).  Suppose that
    $|X||Y|^2\le |\mathcal L|^3$ and
    $|X||\mathcal L|\le p^2$, then
\[
    \mathcal I(X\times Y,\mathcal L)
    \ll
    |X|^{3/4}|Y|^{1/2}|\mathcal L|^{3/4}
    +
    |\mathcal L|,
\]
where the incidence is given by
\[
    \mathcal I(X\times Y,\mathcal L)
    :=
    |\{((x,y),\ell)\in (X\times Y)\times\mathcal L:(x,y)\in\ell\}|.
\]
\end{prop}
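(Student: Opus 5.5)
This proposition is quoted from Stevens--de Zeeuw \cite{MR3742451}, and I would reprove it by reducing to Rudnev's point--plane incidence theorem in $\mathbb F_p^3$.

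\emph{Preliminary reduction.} Vertical lines $x=c$ and horizontal lines $y=c$ each meet $X\times Y$ in at most $|Y|$, respectively $|X|$, points. Moreover, each point of $X\times Y$ lies on at most one line of each of these two families. So these two families together contribute at most $2|X||Y|$ incidences. Using $|X|\le|Y|$ and $|X||Y|^2\le|\mathcal L|^3$, one checks that $|X||Y|\le |X|^{3/4}|Y|^{1/2}|\mathcal L|^{3/4}$, so this contribution is absorbed. Hence we may assume every line has the form $\ell_{a,b}:y=ax+b$ with $a\neq0$.

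\emph{Cauchy--Schwarz.} Write
\[
\mathcal I=\sum_{y\in Y}\bigl|\{\ell\in\mathcal L:\ell^{-1}(y)\in X\}\bigr| .
\]
Cauchy--Schwarz in $y$ gives
\[
\mathcal I^2\le |Y|\cdot Q,\qquad Q=\bigl|\{(x,x',\ell_{a,b},\ell_{a',b'}): x,x'\in X,\ ax+b=a'x'+b'\}\bigr|.
\]

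\emph{Point--plane incidences.} The equation $ax+b=a'x'+b'$ is bilinear in the two groups of variables. Take the points to be $P=\{(x,a',b'):x\in X,\ \ell_{a',b'}\in\mathcal L\}$. To each $(x',\ell_{a,b})$ assign the plane
\[
\pi_{x',a,b}=\{(u,v,w): au-x'v-w+b=0\}.
\]
Then $Q=\mathcal I(P,\Pi)$, where $\Pi$ is the set of these planes. Here $|P|=|\Pi|=|X||\mathcal L|=:m$, and distinct triples $(x',a,b)$ give distinct planes. Rudnev's theorem, valid under $m\ll p^2$, which is exactly the hypothesis $|X||\mathcal L|\le p^2$, gives
\[
Q\ll m^{3/2}+k\,m,
\]
where $k$ is the maximal number of points of $P$ on a line. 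Granting $km\ll m^{3/2}$, we get
\[
\mathcal I^2\ll |Y|\,(|X||\mathcal L|)^{3/2},
\]
which is $\mathcal I\ll |X|^{3/4}|Y|^{1/2}|\mathcal L|^{3/4}$. The additive $|\mathcal L|$ term and the diagonal contributions, namely lines meeting $X\times Y$ in at most one point, account for the second term in the bound.

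\emph{Main obstacle.} The step I expect to be hardest is controlling the collinearity parameter $k$. A line in $\mathbb F_p^3$ either has constant first coordinate $x$, in which case it contains at most $|\mathcal L|$ points of $P$ but one needs the sharper structural count, or it meets each slice $\{u=x\}$ at most once, giving at most $|X|$ points. I would handle the bad lines by noting that a rich line inside a slice corresponds to a pencil or parallel family of original lines. I would then either remove such configurations, since they give few incidences with $X\times Y$, or bound them directly using $|X||Y|^2\le|\mathcal L|^3$; this is exactly where that hypothesis should be needed. After this cleanup $k\ll |X|$, and $km\le m^{3/2}$ follows from $|X|\le|\mathcal L|$. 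The inequality $|X|\le|\mathcal L|$ is itself implied by the hypotheses $|X|\le|Y|$ and $|X||Y|^2\le|\mathcal L|^3$.
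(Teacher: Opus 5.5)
The paper does not prove this proposition; it quotes it from Stevens--de Zeeuw. Your route is the standard one: absorb the axis-parallel lines using $|X||Y|^2\le|\mathcal L|^3$, apply Cauchy--Schwarz over $y\in Y$, then apply Rudnev's point--plane bound to $P=X\times\mathcal L^*$. The reduction, the Cauchy--Schwarz step and the encoding of $ax+b=a'x'+b'$ as point--plane incidences are all correct.

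\textbf{The gap is the step you flagged.} ``Granting $km\ll m^{3/2}$'' is the whole difficulty, and the remedy you sketch does not work. A line $\{x\}\times\ell^*$ in a slice carries as many points of $P$ as there are lines of $\mathcal L$ through a single point of $\mathbb F_p^2$, or in a single parallel class. This number can be far larger than $|X|$.

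You cannot simply discard rich pencils, because they may overlap so heavily that every line of $\mathcal L$ passes through a point of concurrency $>|X|$. For example, take $X=Y=\{1,\dots,N\}$ and all lines meeting $X\times Y$ at least twice with slopes $u/v$, $|u|,|v|\le h$, where $N^{1/2}\ll h\ll N$. Every grid point then lies on $\gg h^2>|X|$ of these lines, so your cleanup deletes everything. The claim that ``pencils give few incidences'' is true only for one pencil, which contributes at most $|X||Y|+|\mathcal L_q|$; summing that over many pencils is hopeless. Even after the correct pruning one only gets $k\lesssim |X||Y|/t$, typically much larger than $|X|$, so $k\ll|X|$ is the wrong target.

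\textbf{The missing idea is to trade concurrency for richness.} Lines through a common point are disjoint away from that point. So if every line of $\mathcal L_{\ge t}$ contains at least $t\ge2$ points of $X\times Y$, at most $|X||Y|/(t-1)$ of them pass through any point, and at most $|X||Y|/t$ lie in any parallel class. Since $t\le|X|\le|Y|$, this gives $k\le 2|X||Y|/t$. Lines with fewer than $t$ points contribute at most $t|\mathcal L|$; this pruning, not a diagonal term, is where the $+|\mathcal L|$ comes from. Rudnev applied to $\mathcal L_{\ge t}$ (still $|X||\mathcal L_{\ge t}|\le p^2$) gives
\[
\mathcal I(X\times Y,\mathcal L)\ll |X|^{3/4}|Y|^{1/2}|\mathcal L|^{3/4}+t|\mathcal L|+|X||Y||\mathcal L|^{1/2}t^{-1/2}.
\]
Choosing $t=\max\{2,(|X||Y|)^{2/3}|\mathcal L|^{-1/3}\}$ leaves an extra term $|\mathcal L|+(|X||Y||\mathcal L|)^{2/3}$. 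When $|Y|^2\le|X||\mathcal L|$, the term $(|X||Y||\mathcal L|)^{2/3}$ is at most the main term. When $|Y|^2\ge|X||\mathcal L|$, the trivial bound $\mathcal I\le|X||\mathcal L|$ is already below $|X|^{3/4}|Y|^{1/2}|\mathcal L|^{3/4}$.

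In this argument the hypothesis $|X||Y|^2\le|\mathcal L|^3$ is used only for the axis-parallel lines. It is not what controls the pencils.
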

\smallskip


\section{Linear projections over finite fields}
\label{section:Linear projections}

In this section, we prove the two finite-field linear projection results stated
in the introduction.  Both are covering statements for dilated sumsets, but the
two arguments use different mechanisms.

The first result treats the two-fold sumset \(A+tA\) in the near-half-density
regime.  Its proof is based on a {\it rigidity argument}: if every set \(A+tA\) misses
a point, then the missing points give rise to a family of affine maps that
almost preserve \(A\).  This approximate invariance is then ruled out by
averaging over translations and dilations.

The second result treats \(n\)-fold sums with \(n\ge3\).  Here, the additional
summands allow one to work at a much lower density.  We first construct an
\((n-1)\)-fold dilated sumset with small complement, and then use a
Cartesian product incidence estimate to fill the remaining elements.

We begin with the near-half-density result.

\begin{proof}[Proof of Theorem~\ref{thm:half-density-dilated-sumset-covering}]
If \(|A|\ge (p+1)/2\), then the conclusion follows immediately from Proposition ~\ref{prop:CD-ineq}.  We may
therefore assume that 
\[
\left(\frac12-\varepsilon_0\right)p<|A|<(p+1)/2.
\]
Suppose, for contradiction, that \(A+tA\ne\mathbb F_p\) for every
\(t\in\mathbb F_p^\times\).  For each \(t\in\mathbb F_p^\times\), choose
\(x_t\in\mathbb F_p\setminus(A+tA)\) and let $B:=\mathbb F_p\setminus A$.  Then
\[
    x_t-tA\subseteq B,
    \qquad
    \text{for every }t\in\mathbb F_p^\times.
\]
In particular, \(x_1-A\) and \(x_t-tA\) are both subsets of \(B\), each of cardinality \(|A|\).  Hence,
\[
    |(x_1-A)\cap(x_t-tA)|
    \ge
    2|A|-|B|
    =
    3|A|-p;
\]
and, applying the affine bijection \(y\mapsto x_1-y\), we get
\begin{align}\label{estimate1}
    |A\cap(tA+d_t)|\ge 3|A|-p,
    \qquad
    d_t:=x_1-x_t.
\end{align}
Define \(g_t:\mathbb F_p\to\mathbb F_p\) by \(g_t(x):=tx+d_t\).  Then
\(g_t(A)=tA+d_t\), and therefore (\ref{estimate1}) gives
\[
    |A\triangle g_t(A)|
    \le
    2|A|-2(3|A|-p)
    =p\delta,
    \qquad
    \forall\,t\in\mathbb F_p^\times,
\]
where \(\delta=(2p-4|A|)/p\).  The size assumption on $A$ gives
\begin{equation}
    \delta<4\varepsilon_0.
    \label{eq:half-density-dilated-delta-small}
\end{equation}
For \(s,t\in\mathbb F_p^\times\), define
\[
    h_{s,t}:=g_{st}^{-1}g_sg_t.
\]
This map is a translation.  By the triangle inequality,
\begin{align}\label{eq:half-density-dilated-defect-bound}
    |A\triangle h_{s,t}(A)|&=
    |g_{st}(A)\triangle g_sg_t(A)|\notag\\
    &\le
    |g_{st}(A)\triangle A|
    +|A\triangle g_s(A)|
    +|g_s(A)\triangle g_sg_t(A)|\notag\\&\le 3\delta p,
\end{align}
where the first equality follows because \(g_{st}\) is a bijection.  We now derive a contradiction by considering two cases. 

Suppose that \(h_{s,t}(x)=x+r\) for some \(s,t\in\mathbb F_p^\times\) and
some \(r\in\mathbb F_p^\times\).  Write \(\tau_r(x):=x+r\). 
For \(u\in\mathbb F_p^\times\), conjugating \(\tau_r\) by \(g_u\) gives
$g_u\tau_rg_u^{-1}(x)=x+ur$.
Thus, these conjugates range over all nonzero translations as \(u\) ranges over
\(\mathbb F_p^\times\).
Using the triangle inequality and
(\ref{eq:half-density-dilated-defect-bound}), we get
\[
\begin{aligned}
    |A\triangle g_u\tau_rg_u^{-1}(A)|
    &\le
    |A\triangle g_u(A)|
    +|g_u(A)\triangle g_u\tau_r(A)| 
    +|g_u\tau_r(A)\triangle g_u\tau_rg_u^{-1}(A)|  \\
    &\le
    5\delta p,
\end{aligned}
\]
which implies that
$|A\triangle(A+v)|\le 5\delta p$ for all 
    $v\in\mathbb F_p^\times$, and hence,

\begin{equation}
    \frac{2|A|(p-|A|)}{p-1}=\frac1{p-1}\sum_{v\in\mathbb F_p^\times}|A\triangle(A+v)|\leq 5\delta p.
    \label{eq:half-density-dilated-translation-average}
\end{equation}
Using the size assumption on $A$, \eqref{eq:half-density-dilated-translation-average}, and \eqref{eq:half-density-dilated-delta-small}, we obtain 
\[
   \left(\frac12-3\varepsilon_0\right)p
   \leq \frac{2|A|(p-|A|)}{p-1}<
   5\delta p< 20\varepsilon_0p,
\]
for all sufficiently large \(p\).
For \(\varepsilon_0=10^{-3}\), these estimates are incompatible for all
sufficiently large \(p\).  This is a contradiction. 

Thus, we may assume that all the maps $h_{s,t}$ are trivial, that is,
\[
    h_{s,t}=\mathrm{id}_{\mathbb F_p},
    \quad\forall\,s,t\in\mathbb F_p^\times,
\]
or equivalently \(g_sg_t=g_{st}\) for all \(s,t\in\mathbb F_p^\times\).
The identity \(g_sg_t=g_{st}\) gives
\(d_{st}=sd_t+d_s\).  Choose a generator \(\lambda\) of
\(\mathbb F_p^\times\), and set
\(
    c:=\frac{d_\lambda}{1-\lambda}.
\)
Then, writing \(t=\lambda^j\), we have
$d_t=(1-t)c$ for all $t\in\mathbb F_p^\times$.
Hence, $g_t(x)=tx+(1-t)c=c+t(x-c)$, so all the maps \(g_t\) fix the common point \(c\).  Since translation preserves cardinality, after translating the coordinates and
renaming \(A-c\) as \(A\), we may assume
that \(g_t(x)=tx\) and
\begin{equation}
   |A\triangle g_t(A)|= |A\triangle tA|\le \delta p
    \qquad
    \text{for every }t\in\mathbb F_p^\times.
    \label{eq:half-density-dilated-dilation-bound}
\end{equation}
Let \(k:=|A\cap\mathbb F_p^\times|\).  Since multiplication by
\(t\in\mathbb F_p^\times\) fixes \(0\) and acts transitively on
\(\mathbb F_p^\times\), we have
\[
    \sum_{t\ne0}|A\cap tA|
    =
    (p-1)\mathbf 1_A(0)+k^2.
\]
It follows from \(|A|=k+\mathbf 1_A(0)\) and \eqref{eq:half-density-dilated-dilation-bound} that 
\begin{equation}
\frac{2k(p-1-k)}{p-1}=
    \frac1{p-1}\sum_{t\ne0}|A\triangle tA|
    <\delta p.
    \label{eq:half-density-dilated-dilation-average}
\end{equation}
Furthermore, since \(k\in\{|A|,|A|-1\}\), the size assumption on $A$, together with \eqref{eq:half-density-dilated-dilation-average} and \eqref{eq:half-density-dilated-delta-small}, gives
\[
     \left(\frac12-5\varepsilon_0\right)p\leq \frac{2k(p-1-k)}{p-1}
    <
    \delta p
    <
    4\varepsilon_0 p,
\]
for all sufficiently large \(p\).  
For \(\varepsilon_0=10^{-3}\), these estimates are incompatible for all
sufficiently large \(p\), which is again a contradiction.  

As a consequence,
there exists \(t\in\mathbb F_p^\times\) such that
\(
    A+tA=\mathbb F_p.
\)
This completes the proof.
\end{proof}

We now turn to the higher-dimensional theorem.  Unlike the preceding
near-density result, this is a genuinely lower-density statement: the additional
summands are used to build a large intermediate sumset before the final covering step.

\begin{proof}[Proof of Theorem~\ref{thm:dilated-n-fold-covering-improved}]
We first choose an \((n-1)\)-fold dilated sumset, with the first coefficient
normalized to \(1\), whose complement is small, and then use the incidence
estimate, Proposition~\ref{prop:sdz-cartesian-product-incidence}, to show that one
more dilate fills this complement.

Set \(k:=n-1\geq 2\).  For
\(\mathbf t=(t_2,\ldots,t_k)\in(\mathbb F_p^\times)^{k-1}\), define
\[
    S_{\mathbf t}:=A+t_2A+\cdots+t_kA.
\]
For \(x\in\mathbb F_p\), let
\[
    r_{\mathbf t}(x)
    :=
    \#\{(a_1,\ldots,a_k)\in A^k:
    a_1+t_2a_2+\cdots+t_ka_k=x\},
\]
and set the additive energy \(E_{\mathbf t}:=\sum_x r_{\mathbf t}(x)^2\).  By the Cauchy--Schwarz inequality,
\begin{align}\label{CS}
    |S_{\mathbf t}|
    \ge
    \frac{|A|^{2k}}{E_{\mathbf t}}.
\end{align}
We average \(E_{\mathbf t}\) over
\((\mathbb F_p^\times)^{k-1}\).  The quantity \(E_{\mathbf t}\) counts tuples
$(a_1,\ldots,a_k,b_1,\ldots,b_k)\in A^{2k}$
satisfying
\[
    (a_1-b_1)+t_2(a_2-b_2)+\cdots+t_k(a_k-b_k)=0.
\]
The diagonal tuples contribute \((p-1)^{k-1}|A|^k\).  For a non-diagonal tuple,
not all of \(a_2-b_2,\ldots,a_k-b_k\) can vanish; otherwise the equation would
force \(a_1=b_1\).  Hence, after choosing an index \(j\ge2\) with
\(a_j\ne b_j\), all shifts except \(t_j\) determine \(t_j\) uniquely.  Thus, each
non-diagonal tuple contributes at most \((p-1)^{k-2}\) choices of
\(\mathbf t\).  Therefore,
\[
    \frac1{(p-1)^{k-1}}
    \sum_{\mathbf t\in(\mathbb F_p^\times)^{k-1}}E_{\mathbf t}
    \le
    |A|^k+\frac{|A|^{2k}-|A|^k}{p-1},
\]
and hence, there exists \(\mathbf t\in(\mathbb F_p^\times)^{k-1}\) such that
\[
    E_{\mathbf t}
    \le
    |A|^k+\frac{|A|^{2k}-|A|^k}{p-1}
    =
    \frac{|A|^k(|A|^k+p-2)}{p-1}.
\]
For this choice of \(\mathbf t\), (\ref{CS}) gives the lower bound
\[
    |S_{\mathbf t}|
    \ge
    \frac{|A|^k(p-1)}{|A|^k+p-2},
\]
and hence,
\[
    |\mathbb F_p\setminus S_{\mathbf t}|
    \le
    p-\frac{|A|^k(p-1)}{|A|^k+p-2}
    =
    \frac{p^2-2p+|A|^k}{|A|^k+p-2}
    \le
    \frac{p^2}{|A|^k}+1.
\]
Fix this choice of \(t_2,\ldots,t_k\), and write $S:=A+t_2A+\cdots+t_kA$ and $H:=\mathbb F_p\setminus S$.  From the choice of $k$,
\begin{equation}
    |H|\le \frac{p^2}{|A|^{n-1}}+1.
    \label{eq:dilated-n-fold-complement-bound}
\end{equation}

We now prove that \(S+sA=\mathbb F_p\) for some
\(s\in\mathbb F_p^\times\).  If \(|H|<|A|\), then for any
fixed \(s\in\mathbb F_p^\times\) and any \(x\in\mathbb F_p\), the translate
\(x-sA\) has cardinality \(|A|\), and therefore cannot be contained in \(H\).
Thus, \((x-sA)\cap S\ne\varnothing\) and hence, \(x\in S+sA\).  As a result,
\(S+sA=\mathbb F_p\).
We may therefore assume that
$|H|\ge |A|$.  Suppose, for contradiction, that \(S+sA\ne\mathbb F_p\) for every
\(s\in\mathbb F_p^\times\).  For each \(s\in\mathbb F_p^\times\), choose
\(x_s\in\mathbb F_p\setminus(S+sA)\).  Then
\begin{align}\label{incidence}
    x_s-sA\subseteq H.
\end{align}
For each \(s\in\mathbb F_p^\times\), define the line
\[
    \ell_s:=\{(u,v)\in\mathbb F_p^2:v=x_s-su\}.
\]
From (\ref{incidence}), we get that the point \((a,x_s-sa)\) lies in \(A\times H\) and on
\(\ell_s\), for every \(a\in A\).  Thus, each \(\ell_s\) contains at least \(|A|\) points of
\(A\times H\).  Let
\[
    \mathcal L:=\{\ell_s:s\in\mathbb F_p^\times\}.
\]
The lines in \(\mathcal L\) have distinct slopes, and thus \(|\mathcal L|=p-1\), and
we get the incidence lower bound
\begin{equation}
    \mathcal I(A\times H,\mathcal L)\ge |A|(p-1).
    \label{eq:dilated-n-fold-incidence-lower}
\end{equation}
We apply Proposition~\ref{prop:sdz-cartesian-product-incidence} with
\(X=A\) and \(Y=H\).  The condition \(|X|\le |Y|\) follows from the assumption, 
$|H|\ge |A|$, and
$|X||\mathcal L|=|A|(p-1)\le p^2$.
It remains to check \(|X||Y|^2\le |\mathcal L|^3\).  By
\eqref{eq:dilated-n-fold-complement-bound} and the hypothesis
\[
    |A|\ge C p^{\frac{3}{2n-1}+\eta},
\]
we have, after choosing \(C=C(n,\eta)\) and then \(p_0=p_0(n,\eta)\)
sufficiently large,
\begin{equation}
    |H|=o\bigl((|A|p)^{1/2}\bigr),
    \label{eq:dilated-n-fold-h-small}
\end{equation}
which implies that, for all sufficiently large \(p\), $|X||Y|^2=  |A||H|^2=o(|A|^2p)\le o(p^3)$.
Therefore, Proposition~\ref{prop:sdz-cartesian-product-incidence} applies, and combining it with \eqref{eq:dilated-n-fold-incidence-lower} gives
\[
    |A|(p-1)\leq\mathcal I(A\times H,\mathcal L)
    \ll
    |A|^{3/4}|H|^{1/2}p^{3/4}+p.
\]
Dividing both sides by \(|A|p\), we get
\[
    1
    \ll
    \left(\frac{|H|^2}{|A|p}\right)^{1/4}
    +
    \frac1{|A|};
\]
however, \eqref{eq:dilated-n-fold-h-small} gives
\(|H|^2/(|A|p)\to0\), which is a contradiction, and the proof is complete.
\end{proof}

\section{Product-type projections over finite fields} \label{section:Product-type projections}
We now pass from linear images to shifted product images.  The underlying
question is still whether a suitable image of \(A^n\) covers all of
\(\mathbb F_p\), but the algebra is now multiplicative rather than additive.
Thus, the results below should be read in parallel with the two linear projection
theorems from the previous section: the density thresholds are the same, while
the proof must also account for zero factors and for the use of inverses in
\(\mathbb F_p^\times\).

We begin with the first result, the two-fold product analogue of the near-half-density
linear theorem.  

\begin{proof}[Proof of Theorem~\ref{thm:half-density-shifted-product-covering}]
If \(|A|=p\), then there is nothing to prove.  If \(\frac{p}{2}<|A|<p\), choose \(a_0\in A\), set \(s:=-a_0\), and choose \(t\notin -A\).  Define
\(X:=A+s\) and \(Y:=A+t\).  By construction, \(0\in X\), \(0\notin Y\), and
\[
    |X\cap\mathbb F_p^\times|=|A|-1,
    \quad\text{and}\quad
    |Y|=|A|.
\]
For any \(\lambda\in\mathbb F_p^\times\), the set
\(\lambda Y^{-1}\) has cardinality \(|A|\), and
\[
    |X\cap\mathbb F_p^\times|+|\lambda Y^{-1}|
    =
    2|A|-1
    >
    p-1,
\]
which implies that
\(\lambda\in XY\), by Proposition ~\ref{prop:CD-ineq}.
Hence, \(\mathbb F_p^\times\subseteq XY\), and since \(0\in X\), we also have \(0\in XY\).  Therefore, \(XY=\mathbb F_p\).

We may therefore assume
\[
    \left(\frac12-\varepsilon_0\right)p<|A|\le \frac p2.
\]
Choose \(a_0\in A\), set \(X:=A-a_0\), \(C:=X\setminus\{0\}\), and
\(B:=\mathbb F_p\setminus A\). 
Suppose, for contradiction, that \(X(A+u)\ne\mathbb F_p\) for every
\(u\in\mathbb F_p\).  Since \(0\in X\), every product set \(X(A+u)\) contains
\(0\).  Hence, for each \(u\), we may choose
\(x_u\in\mathbb F_p^\times\setminus X(A+u)\).  Then, for every \(c\in C\),
\[
    x_u c^{-1}-u\in B.
\]
Fix \(u=0\), and define \(D:=x_0C^{-1}\).  Then, \(D\subseteq B\) and
\(|D|=|A|-1\).  For each \(u\in\mathbb F_p\), set
\[
    \alpha_u:=\frac{x_u}{x_0}\in\mathbb F_p^\times,
    \quad\text{and}\quad
    g_u(x):=\alpha_u x-u.
\]
Then, \(g_u(D)\subseteq B\) for every \(u\), and \(g_0=\mathrm{id}_{\mathbb F_p}\).  Since
both \(D\) and \(g_u(D)\) are subsets of \(B\), each of cardinality
\(|A|-1\), we then have
\[
    |D\cap g_u(D)|
    \ge
    2(|A|-1)-(p-|A|)
    =
    3|A|-p-2,
\]
and hence,
\begin{align}\label{esti22}
    |D\triangle g_u(D)|
    =
    2|D|-2|D\cap g_u(D)|
    \le
    p\delta,\quad\forall\,u\in\mathbb F_p,
\end{align}
where
\[
    \delta:=\frac{2p-4|A|+2}{p}.
\]
The size assumption on $A$ gives
\begin{equation}
    \delta<4\varepsilon_0+\frac{2}{p}.
    \label{eq:half-density-shifted-product-delta-small}
\end{equation}
For \(u,v\in\mathbb F_p\), set
\(w:=u+\alpha_uv\).  Then,
\[
    g_ug_v(x)=\alpha_u\alpha_vx-u-\alpha_uv
    =
    \alpha_u\alpha_vx-w,
\]
and hence, the map
$h_{u,v}:=g_w^{-1}g_ug_v$
is a dilation.  By the triangle inequality and (\ref{esti22}),
\begin{align}\label{eq:half-density-shifted-product-defect-bound}
    |D\triangle h_{u,v}(D)|&=
    |g_w(D)\triangle g_ug_v(D)|\notag\\
    &\leq
    |g_w(D)\triangle D|
    +|D\triangle g_u(D)|+|g_u(D)\triangle g_ug_v(D)|\notag\\&\le 3p\delta.
\end{align}
We now derive a contradiction by considering two cases.

Suppose \(h_{u,v}(x)=\rho x\) for some \(u,v\in\mathbb F_p\), with
\(\rho\ne1\).  By \eqref{eq:half-density-shifted-product-defect-bound}, we obtain that
$|D\triangle \rho D|\le 3p\delta.$
Observe that, for any \(r\in\mathbb F_p\),
\[
    g_r\circ(\rho)\circ g_r^{-1}(x)
    =
    \rho x+(\rho-1)r.
\]
Using the triangle inequality together with
\eqref{eq:half-density-shifted-product-defect-bound}, we get
\[
\begin{aligned}
    |D\triangle g_r(\rho g_r^{-1}(D))|
    &\le
    |D\triangle g_r(D)|
    +|g_r(D)\triangle g_r(\rho D)| 
    +|g_r(\rho D)\triangle g_r(\rho g_r^{-1}(D))|  \\
    &\le
    5p\delta.
\end{aligned}
\]
Since \(g_r(\rho g_r^{-1}(D))=\rho D+(\rho-1)r\), and 
\((\rho-1)r\) ranges over all of \(\mathbb F_p\), we have
\begin{equation}
    |D\triangle(\rho D+b)|\le 5p\delta,
    \quad
    \forall\,b\in\mathbb F_p.
    \label{eq:half-density-shifted-product-uniform-translate-bound}
\end{equation}
Averaging over \(b\), we have
\[
\begin{aligned}
    2|D|-\frac{2|D|^2}{p}
    =
    \frac1p\sum_{b\in\mathbb F_p}|D\triangle(\rho D+b)| \leq 5 p \delta.
\end{aligned}
\]
Recall that \(|D|=|A|-1\). The size assumption on $A$, together with \eqref{eq:half-density-shifted-product-uniform-translate-bound} and \eqref{eq:half-density-shifted-product-delta-small}, implies that 
\[
\left(\frac{1}{2}-3\varepsilon_0 \right)p \leq
    2|D|-\frac{2|D|^2}{p}
    \leq 5p\delta
    <
    20\varepsilon_0p+10,
\]
for all sufficiently large
\(p\).
For \(\varepsilon_0=10^{-3}\), these two estimates are incompatible for all
sufficiently large \(p\), a contradiction.  

Thus, we may assume all the maps \(h_{u,v}\) are trivial, that is
\[
h_{u,v}=\mathrm{id}_{\mathbb F_p} \qquad
    \text{for all }u,v\in\mathbb F_p,
\]
or equivalently
\(
    g_ug_v=g_{u+\alpha_uv}
\)
for all \(u,v\in\mathbb F_p\).
In this case,
\[
    \Gamma:=\{g_u:u\in\mathbb F_p\}
\]
is closed under composition.  Since the translation term of \(g_u\) is \(-u\),
the maps \(g_u\) are pairwise distinct and thus \(|\Gamma|=p\).  Also,
\(g_0\) is the identity map, so \(\Gamma\) is a subgroup of the affine group of
\(\mathbb F_p\) of order \(p\).
We claim that every element of \(\Gamma\) is a translation.  Indeed, an affine map
\(x\mapsto \alpha x+\beta\) with \(\alpha\ne1\) has a fixed point and is
conjugate to the dilation \(x\mapsto \alpha x\), so its order divides \(p-1\).
But every element of \(\Gamma\) has order dividing \(p\).  Hence,
\(\alpha=1\) for every element of \(\Gamma\). 
Thus, \eqref{esti22} becomes
\[
    |D\triangle(D-u)|\le p\delta,\quad\forall\,
   u\in\mathbb F_p.
\]
Averaging over \(u\), together with \eqref{eq:half-density-shifted-product-delta-small}, we get
\[
    2|D|\left(1-\frac {|D|}{p}\right)
    =
    \frac1p\sum_{u\in\mathbb F_p}|D\triangle(D-u)|
    \le
    p\delta
     <4\varepsilon_0 p+2.
\]
However, the left-hand side is at least \((1/2-3\varepsilon_0)p\) for all sufficiently
large \(p\).  For \(\varepsilon_0=10^{-3}\), these estimates
are incompatible for all sufficiently large \(p\).  This is again a contradiction.

As a consequence, there exists \(u\in\mathbb F_p\) such that
\(X(A+u)=\mathbb F_p\).  Equivalently, with \(s=-a_0\) and \(t=u\), we have
\[
    (A+s)(A+t)=\mathbb F_p.
\]
This completes the proof.
\end{proof}

We next turn to the \(n\)-fold product-type theorem.  Compared with the
\(n\)-fold linear result, the averaging step is now carried out for products of
shifted copies of \(A\), and the complement is measured inside
\(\mathbb F_p^\times\).  The role of zero is separated off at the beginning by
choosing the first shift so that \(0\) belongs to the partial product.  Once the
nonzero complement is made small, the final covering step again reduces to a
Cartesian-product incidence estimate.

\begin{proof}[Proof of Theorem~\ref{thm:shifted-product-covering}]
The trivial cases being clear, assume \(2\le |A|<p\).  Choose \(a_0\in A\), set
\(t_1:=-a_0\), and put \(k:=n-1\geq2\).  Then $t_1 + A$ contains \(0\).  
For \(\mathbf t=(t_2,\ldots,t_k)\), define
\[
    S_{\mathbf t}:=(t_1 + A)(t_2 + A)\cdots(t_k + A),
    \quad
    S_{\mathbf t}^{\ast}:=S_{\mathbf t}\cap\mathbb F_p^\times\quad\text{and}\quad H_{\mathbf t}:=\mathbb F_p^\times\setminus S_{\mathbf t}^{\ast}.
\]
We will choose \(\mathbf t\) so that the size of \(H_{\mathbf t}\) is small.  This is done
by the following two averaging arguments.
\begin{itemize}
    \item \textbf{First averaging argument:}
Let \(\mathbf t=(t_2,\ldots,t_k)\in\mathbb F_p^{k-1}\).  For
\(x\in\mathbb F_p^\times\), define
\begin{align}\label{mulexpression}r_{\mathbf t}(x)
    :=
    \#\left\{(a_1,\ldots,a_k)\in A^k:
    \prod_{i=1}^k(a_i+t_i)=x\right\},
\end{align}
and set the multiplicative energy
\(E_{\mathbf t}:=\sum_{x\in\mathbb F_p^\times}r_{\mathbf t}(x)^2\). Note that
\begin{align}\label{L^1}\sum_{x\in\mathbb F_p^\times} r_{\mathbf t}(x)\ge (|A|-1)^k;
\end{align}
the lower bound of the $L^1$-norm can be deduced from the choice of $t_1$ and the fact that for each \(i\ge2\), at most one value of
\(a_i\in A\) satisfies \(a_i+t_i=0\).
Averaging \(E_{\mathbf t}\) over \(\mathbb F_p^{k-1}\), the diagonal pairs
contribute at most \(p^{k-1}|A|^k\).  For a non-diagonal pair
\((a_1,\ldots,a_k),(b_1,\ldots,b_k)\in A^k\) contributing to \(E_{\mathbf t}\),
we have
\[
    \prod_{i=1}^k(a_i+t_i)
    =
    \prod_{i=1}^k(b_i+t_i)
    \ne0.
\]
If \(a_2=b_2,\ldots,a_k=b_k\), then the nonzero product condition forces
\(a_1=b_1\), contrary to non-diagonality.  Hence, \(a_j\ne b_j\) for some
\(j\in\{2,\ldots,k\}\).  Once all \(t_i\) with \(i\ne j\) are fixed, the above
equation is linear in \(t_j\), and has at most one solution.  Thus, each
non-diagonal pair contributes at most \(p^{k-2}\) choices of \(\mathbf t\), and hence, there exists \(\mathbf t\in\mathbb F_p^{k-1}\) such that
\[
    E_{\mathbf t}
    \le
    |A|^k+\frac{|A|^{2k}-|A|^k}{p}.
\]
For this choice of \(\mathbf t\), the Cauchy--Schwarz inequality together with \eqref{L^1} gives
\[
    |S_{\mathbf t}^{\ast}|
    \ge
    \frac{(|A|-1)^{2k}}
    {|A|^k+\frac{|A|^{2k}-|A|^k}{p}},
\]
which implies that, for all sufficiently large $p$,
\begin{align}\label{compare1}
    |\mathbb F_p^\times\setminus S_{\mathbf t}^{\ast}|
    \ll_n
    \frac{p^2}{|A|^k}+\frac{p}{|A|}+1.
\end{align}

\item \textbf{Second averaging argument:}
Now restrict \(t_2,\ldots,t_k\) to $U:=\mathbb F_p\setminus(-A)$, with
$|U|=p-|A|$.  For \(\mathbf t\in U^{k-1}\), all factors \(a_i+t_i\), \(i\ge2\), are
nonzero for every \(a_i\in A\).  The only possible zero factor in (\ref{mulexpression}) is
\(a_1+t_1=a_1-a_0\).  Hence,
\begin{align}\label{L^1again}
\sum_{x\in\mathbb F_p^\times} r_{\mathbf t}(x)
    =
    (|A|-1)|A|^{k-1}
    =:T,
    \qquad
    \text{for every }\mathbf t\in U^{k-1}.
\end{align}
Averaging \(E_{\mathbf t}\) over \(U^{k-1}\), the diagonal contribution is
\(|U|^{k-1}T\), and the same linearity argument as above shows that each
non-diagonal pair contributes at most \(|U|^{k-2}\) choices of \(\mathbf t\).
Thus, there exists \(\mathbf t\in U^{k-1}\) such that
\[
    E_{\mathbf t}
    \le
    T+\frac{T^2-T}{p-|A|}.
\]
By the definition of multiplicative energy, the Cauchy--Schwarz inequality, and \eqref{L^1again}, we obtain
\[
    |S_{\mathbf t}^{\ast}|
    \ge
    \frac{T^2}{T+\frac{T^2-T}{p-|A|}}
    =
    \frac{T(p-|A|)}{T+p-|A|-1}.
\]
Since \(T=(|A|-1)|A|^{k-1}\asymp |A|^k\), this gives
\begin{align}\label{compare2}
|\mathbb F_p^\times\setminus S_{\mathbf t}^{\ast}|
    \ll_n
    |A|+\frac{p^2}{|A|^k}+1.
\end{align}

\end{itemize}
Taking the better of the two choices obtained from \eqref{compare1} and \eqref{compare2}, we may choose \(t_2,\ldots,t_k\) such
that, with
\[
    0\in S:=(t_1 + A)(t_2 + A)\cdots(t_k + A),
    \quad\text{and}\quad
    S^\ast:=S\cap\mathbb F_p^\times,
\]
we have
\begin{equation}
  |\mathbb F_p^\times\setminus S^\ast|
    \ll_n
    \frac{p^2}{|A|^k}
    +
    \min\left\{|A|,\frac{p}{|A|}\right\}
    +1.
    \label{eq:shifted-product-final-complement-bound}
\end{equation}

We next apply the incidence estimate, as in the linear projection case. Define
\(
    H:=\mathbb F_p^\times\setminus S^\ast.
\)
We now show that \(S(t_n + A)=\mathbb F_p\) for some \(t_n\in\mathbb F_p\).
If \(|H|<|A|\), choose \(t_n\in\mathbb F_p\setminus(-A)\).  Then
\(t_n + A\subseteq\mathbb F_p^\times\) and for every
\(x\in\mathbb F_p^\times\), the set \(x(t_n + A)^{-1}\) has cardinality \(|A|\),
and hence, cannot be contained in \(H\).  Thus, \(x\in S(t_n + A)\), that is $\mathbb F_p^\times\subseteq S(t_n + A)$.  Since
\(0\in S\), we also have \(0\in S(t_n + A)\), and so \(S(t_n + A)=\mathbb F_p\).

We may therefore assume that $|H|\ge |A|$.
Suppose, for contradiction, that \(S(t + A)\ne\mathbb F_p\) for every
\(t\in\mathbb F_p\).  Since \(0\in S\), every missing element is nonzero.
For each \(t\in\mathbb F_p\), choose
\(x_t\in\mathbb F_p^\times\setminus S(t + A)\).  Then, for every
\(a\in A\) with \(t+a\ne0\), we have
\begin{align}\label{prepincidence}
   x_t^{-1}(t+a)\in H^{-1}.
\end{align}
For each \(t\in\mathbb F_p\), define the line
\[
    \ell_t:=\{(u,v)\in\mathbb F_p^2:\,v=x_t^{-1}u+tx_t^{-1}\}.
\]
Then, from (\ref{prepincidence}), for every \(a\in A\) with \(t+a\ne0\), the point
$\left(a,x_t^{-1}(a+t)\right)$
lies in \(A\times H^{-1}\) and on \(\ell_t\).  Thus, each \(\ell_t\) contains at
least \(|A|-1\) points of \(A\times H^{-1}\).  The lines \(\ell_t\) are distinct.  Let
\[
    \mathcal L:=\{\ell_t:t\in\mathbb F_p\}.
\]
Then \(|\mathcal L|=p\), and we have the incidence lower bound
\begin{equation}
    \mathcal I(A\times H^{-1},\mathcal L)\ge (|A|-1)p.
    \label{eq:shifted-product-incidence-lower}
\end{equation}
We apply Proposition~\ref{prop:sdz-cartesian-product-incidence} with
\(X=A\) and \(Y=H^{-1}\).  Since \(H\subseteq\mathbb F_p^\times\), inversion
is a bijection on \(H\), so \(|H^{-1}|=|H|\).  The condition \(|X|\le |Y|\)
follows from the assumption, $|H|\ge |A|$, and
\(|X||\mathcal L|=|A|p\le p^2\).
It remains to check \(|X||Y|^2\le|\mathcal L|^3\), that is,
\(|A||H|^2\le p^3\).  From \eqref{eq:shifted-product-final-complement-bound}
and \(k=n-1\),
\[
    |H|
    \ll_n
    \frac{p^2}{|A|^{n-1}}
    +
    \min\left\{|A|,\frac{p}{|A|}\right\}
    +1.
\]
A direct computation using the hypothesis \(|A|\ge C p^{\frac{3}{2n-1}+\eta}\) gives
\begin{equation}
    |H|=o\bigl((|A|p)^{1/2}\bigr).
    \label{eq:shifted-product-h-small}
\end{equation}
This implies that $|A||H|^2=o(|A|^2p)\le o(p^3),$
and hence, \(|A||H|^2\le p^3=|\mathcal L|^3\) for all sufficiently large \(p\).
Thus, Proposition~\ref{prop:sdz-cartesian-product-incidence}, together with \eqref{eq:shifted-product-incidence-lower}, yields
\[
   (|A|-1)p\leq \mathcal  I(A\times H^{-1},\mathcal L)
    \ll
    |A|^{3/4}|H|^{1/2}p^{3/4}+p.
\]
Dividing by \(|A|p\) and using \(|A|\to\infty\), we obtain
\[
    1
    \ll
    \left(\frac{|H|^2}{|A|p}\right)^{1/4}
    +
    \frac1{|A|}.
\]
However, \eqref{eq:shifted-product-h-small} gives \(|H|^2/(|A|p)\to0\), which is a contradiction. This proves the theorem.
\end{proof}

\section{Euclidean product-type projections} \label{section:Euclidean product-type projections}
In this section, we prove the Euclidean analogue for product-type projections. The main idea is to pass to logarithmic coordinates: after choosing the shifts
\(t_1,\ldots,t_n\) sufficiently large, the shifted product set
\[
    (t_1 + A)(t_2 + A)\cdots(t_n + A)
\]
is transformed into the sumset
\[
    \log(t_1 + A)+\log(t_2 + A)+\cdots+\log(t_n + A).
\]
We then choose the shifts so that the corresponding convolution measure has a
continuous density.

We recall the measure-theoretic input used below; see
Mattila~\cite[Sections~2.5 and~3.5]{MR3617376}.  A finite Borel measure
\(\mu\) supported on a set \(E\subseteq\mathbb R\) is called an
\(s\)-Frostman measure if
\[
    \mu(B(x,r))\le C r^s
    \qquad
    \text{for all }x\in\mathbb R,\ r>0.
\]
Frostman's lemma says that such measures exist for every
\(s<\dim_H E\), after replacing \(E\) by a compact subset if necessary.
Moreover, this ball-growth condition implies finite \(s'\)-energy for every
\(0<s'<s\).  Equivalently, we shall use the standard energy formulation:
if $0<\sigma<\dim_H E$, then there exist a compact set \(K\subseteq E\) and a Borel probability measure
\(\mu\) supported on \(K\) such that
\[
    I_\sigma(\mu)
    :=
    \iint |x-y|^{-\sigma}\,d\mu(x)d\mu(y)
    <\infty.
\]
We will only use this finite-energy property. Notice also that such a measure has no atoms, since an atom would make the integral defining
\(I_\sigma(\mu)\) diverge along the diagonal.

\begin{proof}[Proof of Theorem~\ref{thm:continuous-shifted-product-interior}]
Choose
\[
    \frac{2}{n}<\sigma<\dim_H A.
\]
Since \(A\subseteq\mathbb R\), we may decrease \(\sigma\) if necessary and
assume \(0<\sigma<1\).  By the energy formulation recalled above, there are a
compact set \(K\subseteq A\) and a Borel probability measure \(\mu\) supported
on \(K\) such that $I_\sigma(\mu)<\infty$. In particular, \(\mu\) has no atoms.

Choose \(R>0\) sufficiently large so that \(t+x>0\) for all
\(t\in[R,2R]\) and \(x\in K\).  For \(t\in[R,2R]\), set
\[
    \phi_t(x):=\log(t+x),
    \quad\text{and}\quad
    \nu_t:=(\phi_t)_\#\mu.
\]
Then, for all $t\in[R,2R]$, \(\nu_t\) is a probability measure supported on \(\log(t+K)\).

We first prove the averaged Fourier decay estimate:
\begin{equation}
    \int_R^{2R} |\widehat{\nu_t}(\xi)|^2\,dt
    \ll_{\mu,R,\sigma}
    (1+|\xi|)^{-\sigma},
    \quad
    \forall\,\xi\in\mathbb R.
    \label{eq:euclidean-product-avg-fourier-decay}
\end{equation}
For \(|\xi|\le1\), \eqref{eq:euclidean-product-avg-fourier-decay} follows directly from \(|\widehat{\nu_t}(\xi)|\le1\).  It suffices to verify \eqref{eq:euclidean-product-avg-fourier-decay} under the assumption that
\(|\xi|\ge1\).  From the definition of the pushforward measure $\nu_t$ and the change of variable,
\begin{align} \label{eq:upper bound}
     \int_R^{2R}|\widehat{\nu_t}(\xi)|^2\,dt&=\int_R^{2R}\widehat{\nu_t}(\xi)\cdot\overline{\widehat{\nu_t}}(\xi)\,dt\notag\\
    &\le
    \iint
    \left|
    \int_R^{2R}
    e^{-2\pi i\xi(\log(t+x)-\log(t+y))}
    \,dt
    \right|
    d\mu(x)d\mu(y).
\end{align}
For \(x\ne y\) in $K$ and \(t\in[R,2R]\), write
\[
    \Psi_{x,y}(t):=\log(t+x)-\log(t+y)
\]
for the phase function.
Then, for the distinct $x,y\in K$ and \(t\in[R,2R]\),
\[
    \Psi'_{x,y}(t)
    =
    \frac{y-x}{(t+x)(t+y)}\quad\text{and}\quad |\Psi'_{x,y}(t)|\asymp\frac{|x-y|}{R^2}\asymp_R |x-y|.
\]
After increasing \(R\) if necessary, \(\Psi'_{x,y}\) is monotone in \(t\).
Hence, the first-derivative form of van der Corput's lemma \cite[Chapter~VIII]{MR1232192} gives, for $\mu$-almost every $x,y$,
\begin{align}\label{staionary}
\left|
    \int_R^{2R}
    e^{-2\pi i\xi\Psi_{x,y}(t)}
    \,dt
    \right|
    \ll_R
    \min\left\{1,\frac{1}{|\xi||x-y|}\right\}.
\end{align}
Since \(0<\sigma<1\), combining \eqref{staionary} with \eqref{eq:upper bound} gives
\[
\begin{aligned}
    \int_R^{2R}|\widehat{\nu_t}(\xi)|^2\,dt
    &\ll_R
    |\xi|^{-\sigma}
    \iint |x-y|^{-\sigma}\,d\mu(x)d\mu(y)      \\
    &=
    |\xi|^{-\sigma} I_\sigma(\mu),\quad\forall |\xi|\geq1,
\end{aligned}
\]
which proves \eqref{eq:euclidean-product-avg-fourier-decay}, as desired.

Choose \(\gamma>1/2\) such that
$2\gamma+1<n\sigma.$  By Tonelli's theorem and
\eqref{eq:euclidean-product-avg-fourier-decay}, we have
\[
\begin{aligned}
\int_{[R,2R]^n}
\int_{\mathbb R}
(1+|\xi|)^{2\gamma}
\prod_{j=1}^n |\widehat{\nu_{t_j}}(\xi)|^2
\,d\xi\,dt_1\cdots dt_n                            
\ll_{\mu,R,\sigma}
\int_{\mathbb R}
(1+|\xi|)^{2\gamma-n\sigma}
\,d\xi
<\infty,
\end{aligned}
\]
where the finiteness is guaranteed by the choice of $\gamma.$  Hence, there exists
\((t_1,\ldots,t_n)\in[R,2R]^n\) such that
\begin{equation}
    \int_{\mathbb R}
    (1+|\xi|)^{2\gamma}
    \prod_{j=1}^n |\widehat{\nu_{t_j}}(\xi)|^2
    \,d\xi
    <\infty;
    \label{eq:euclidean-product-sobolev-fourier-bound}
\end{equation}
In other words, if we define
\[
    \rho:=\nu_{t_1}*\cdots *\nu_{t_n},
\]
then \eqref{eq:euclidean-product-sobolev-fourier-bound} implies that \(\rho\) has a density in \(H^\gamma(\mathbb R)\).  Since \(\gamma>1/2\), Sobolev
embedding gives a continuous representative of this density.

Since \(\rho\) has total mass \(1\), this continuous density is not
identically zero.  As \(\rho\) is a positive measure, the density is
nonnegative, and hence, it is positive on some nonempty open interval
\(J\subseteq\mathbb R\).  Thus, \(J\subseteq\operatorname{supp}(\rho)\).  On the
other hand,
\[
    \operatorname{supp}(\rho)
    \subseteq
    \log(t_1 + K)+\cdots+\log(t_n + K).
\]
Therefore,
\[
    J\subseteq \log(t_1 + K)+\cdots+\log(t_n + K),
\]
which implies that
\[
    \exp(J)
    \subseteq
    (t_1 + K)(t_2 + K)\cdots(t_n + K).
\]
Since \(K\subseteq A\), the shifted product set
\[
    (t_1 + A)(t_2 + A)\cdots(t_n + A)
\]
contains the nonempty open interval \(\exp(J)\).  This proves the theorem.
\end{proof}

\bigskip
\noindent{\bf AI Disclosure:}
The authors used GPT-5.5 for language editing and exploratory discussion. In particular, the key idea in Theorem~\ref{thm:half-density-dilated-sumset-covering} was discovered through a discussion with GPT. The authors take full responsibility for the mathematical content of the paper.

\medskip

\noindent {\bf Acknowledgment:}
 The first author is supported by the MOE Taiwan-Caltech Fellowship during the conduct of this
research.  The second and third authors are supported by the National Science and Technology Council (NSTC) under Grant No.~111-2115-M-002-010-MY5.

\end{document}